\documentclass[11pt]{article}
\usepackage{amsmath, amsthm,amsfonts,amssymb,amscd}
\usepackage{color}\usepackage { xcolor }
\usepackage{cancel}
\usepackage[normalem]{ulem}
\usepackage{geometry}
\usepackage{tikz}
\usepackage{subfigure}
\numberwithin{equation}{section}
\usepackage[T1]{fontenc}
\usepackage{cite}
\usepackage{enumerate}

\usepackage{float}
\usepackage{soul}
\usepackage{graphicx}
\usepackage{placeins}
\usepackage{float}
\usepackage{marginnote}
\newcommand{\Limsup}{\mathop{{\rm Lim}\,{\rm sup}}}

\def\tto{\;{\lower 1pt \hbox{$\rightarrow$}}\kern -10pt
	\hbox{\raise 2pt \hbox{$\rightarrow$}}\;}

\def\ra{\rangle}
\def\la{\langle}

\def\B{\mathbb B}

\def\h{\hfill\Box}
\def\R{\mathbb R}

\def\ox{\bar{x}}

\def\h{\hfill\triangle}

\newcounter{lk}
\def\Limsup{\mathop{{\rm Lim}\,{\rm sup}}}

\begin{document}
	
	\newtheorem{Theorem}{Theorem}[section]
	\newtheorem{Proposition}[Theorem]{Proposition}
	\newtheorem{Remark}[Theorem]{Remark}
	\newtheorem{Lemma}[Theorem]{Lemma}
	\newtheorem{Corollary}[Theorem]{Corollary}
	\newtheorem{Definition}[Theorem]{Definition}
	\newtheorem{Example}[Theorem]{Example}
	\newtheorem{Fact}[Theorem]{Fact}
	\renewcommand{\theequation}{\thesection.\arabic{equation}}
	\normalsize
	\def\proof{
		\normalfont
		\medskip
		{\noindent\itshape Proof.\hspace*{6pt}\ignorespaces}}
	\def\endproof{$\h$ \vspace*{0.1in}}

	\title{\small \bf VARIATIONAL ANALYSIS OF METRIC PROJECTIONS ONTO ISOTONE PROJECTION CONES VIA CODERIVATIVES}
	\date{}
	
	\author{Le Van Hien\footnote{Faculty of Education, HaTinh University, Hatinh, Vietnam; email: hien.levan@htu.edu.vn.}}
	
	\maketitle
	
	{\small \begin{abstract}
		In this paper, we study variational properties of the metric projection mapping onto isotone projection cones in finite-dimensional Euclidean spaces. We derive explicit formulas for both the Fréchet coderivative and the Mordukhovich coderivative of the projection operator. The analysis is based on a local description of the projection mapping via an associated generating system in a neighborhood of a reference point, which leads to computable coderivative characterizations.			
		As an application, we compute the covering constant of the projection mapping, providing a quantitative description of its local regularity. Furthermore, we establish verifiable sufficient conditions for the Aubin property of the solution mapping associated with parametric nonlinear complementarity problems associated with isotone projection cones.
		
		The obtained results contribute to the variational analysis of metric projections in a general cone setting where orthogonality arguments are not available, and to the stability theory of complementarity systems in finite dimensions.

	\end{abstract}}
	\noindent {{\bf Key words.} metric projection, isotone projection, Fréchet coderivative, Mordukhovich coderivative, complementarity problems}
	
	\medskip
	
	\noindent {\bf Mathematics subject classification (2010):} 47H05 , 46C05 , 49M27 , 65K10 , 90C25
	\normalsize
	\section{Introduction}
	\setcounter{equation}{0}
Projection operators onto convex sets play a fundamental role in optimization, variational inequalities, and equilibrium problems. In particular, projections onto structured cones arise naturally in complementarity systems and ordered models, where monotonicity properties are essential. Among these, isotone projection cones form an important class due to their intrinsic order structure and their relevance in applications; see \cite{IN90,IN92,N09}.

There have been numerous studies on projection operators onto special classes of sets in Hilbert spaces, including the derivation of explicit projection formulas, the computation of generalized derivatives, and various applications; see, for example, \cite{FP82,H77,H24,H24(2),HQ,Li24.2,OS08,R17,S87}. In particular, explicit formulas for projections onto lattice cones, together with their applications, and especially those concerning isotone projection cones, have been systematically developed by N\'emeth and co-authors in \cite{IN90,IN92,NN10}. These results provide valuable insight into the structure of projection mappings and serve as a foundation for further analytical developments.

Complementarity problems constitute important models in optimization, economics, physics, and engineering. In particular, complementarity problems associated with isotone projection cones have attracted considerable attention due to their wide range of applications. It is well known (see \cite{IN90}) that such problems can be equivalently reformulated as fixed-point problems involving projection-type mappings. This equivalence highlights the importance of studying the properties of projection operators onto cones, as well as the development of efficient methods for computing such projections.

In parallel, the study of stability and sensitivity of solution mappings has become a central topic in modern optimization. In this context, tools from variational analysis, particularly generalized differentiation, have proven to be highly effective \cite{DR14,HOS12,LM04,M18,M24,MO07,RW98,YZ17}. In particular, the Mordukhovich and Fréchet coderivatives provide a powerful framework for characterizing stability properties such as the Aubin property, metric regularity, and calmness. For example, Outrata and Sun \cite{OS08} employed coderivative techniques to analyze stability properties of complementarity problems associated with the second-order cone via projection onto this cone.

Motivated by the important role of projection operators onto isotone projection cones in complementarity problems and related applications, it is natural to investigate their variational properties in greater detail. Several studies have already addressed coderivative formulas for projection operators onto positive cones (see, e.g., \cite{HQ,Li24.2}). However, for isotone projection cones, the underlying orthogonal structure is no longer available, making the analysis substantially more delicate and mathematically interesting. Moreover, the applications of these variational properties to the stability analysis of complementarity problems have not yet been fully developed. These observations motivate the present study.

The main objective of this paper is to investigate differentiability properties and to derive explicit formulas for both the Fréchet coderivative and the Mordukhovich coderivative of the metric projection onto isotone projection cones. Our approach is based on a local representation of the projection mapping via an associated generating system in a neighborhood of a reference point. This representation enables us to reduce the coderivative analysis to tractable algebraic conditions and to obtain explicit expressions under suitable assumptions.

The obtained results are then applied to analyze the local behavior of the projection operator. In particular, we characterize the covering constant of the projection mapping, providing quantitative information on its Lipschitzian behavior. Furthermore, we establish verifiable conditions for the Aubin property of solution mappings to parametric nonlinear complementarity problems associated with isotone projection cones. These results contribute to the understanding of stability issues in complementarity systems and provide useful tools for sensitivity analysis in optimization.

The structure of the paper is as follows. Section 2 reviews the basic notions and tools from variational analysis that will be used throughout the paper, and recalls properties of lattice cones. Section 3 is devoted to coderivative computations for the projection operator onto isotone projection cones. Section 4 presents applications to the covering constant and the Aubin property of solution mappings. Finally, Section 5 concludes the paper.

	\section{Preliminaries}
	\setcounter{equation}{0}
We begin by introducing the notation used throughout the paper.
For a set $C\subset\mathbb{R}^n$, ${\rm int}\,C$ denotes its interior.
The notation $t\downarrow 0$ means that $t\to 0$ with $t>0$.
The set of natural numbers is denoted by $\mathbb{N}$, and
$\mathbb{N}^\ast:=\mathbb{N}\setminus\{0\}$.
For $x\in\mathbb{R}^n$ and $r>0$, $\mathbb{B}(x,r)$ denotes the closed ball
centered at $x$ with radius $r$, and we write $\mathbb{B}:=\mathbb{B}(0,1)$
for the unit ball.
	
	Below are basic notions and facts from variational analysis, which are frequently used in the following; see \cite{M18,RW98} for more details.
	
	\begin{Definition}[see \cite{M18,RW98}]
	{\rm	Let  $f:\R^n\to \R^m$ be a single-valued mapping.
		For $x,w\in \R^n$ with $w\neq 0$, if the limit
		\[
		\lim_{t\downarrow 0}\frac{f(x+tw)-f(x)}{t}
		=: f'(x;w)
		\]
		exists, then $f$ is said to be \emph{G\^ateaux directionally differentiable}
		at $x$ in the direction $w$.
		The vector $f'(x;w)$ is called the \emph{G\^ateaux directional derivative} of $f$
		at $x$ along $w$.
}
	\end{Definition}
	
	\begin{Definition}[see \cite{M18,RW98}]
	{\rm	Let $f:\R^n\to \R^m$ be a single-valued mapping.
		Given $\bar x\in \R^n$, if there exists a continuous linear operator
		$\nabla f(\bar x):\R^n\to \R^m$ such that
		\[
		\lim_{h\to 0}
		\frac{f(\bar x+h)-f(\bar x)-\nabla f(\bar x)h}{\|h\|}
		=0,
		\]
		then $f$ is said to be \emph{Fr\'echet differentiable} at $\bar x$, and
		$\nabla f(\bar x)$ is called the \emph{Fr\'echet derivative} of $f$ at $\bar x$.
		\medskip
		
		Moreover, if
		\[
		\lim_{(u,v)\to(\bar x,\bar x)}
		\frac{f(u)-f(v)-\nabla f(\bar x)(u-v)}{\|u-v\|}
		=0,
		\]
		then $f$ is said to be \emph{strictly Fr\'echet differentiable} at $\bar x$.
}	\end{Definition}

\begin{Definition}[see \cite{M18,RW98}]
{\rm	Let $\Omega\subset\mathbb{R}^n$ be nonempty and let $\bar x\in\Omega$.
	
	The \emph{ regular/Fréchet normal cone} to $\Omega$ at $\bar x$ is given by
	\[
	\widehat N_\Omega(\bar x)
	:=
	\Bigl\{
	z\in\mathbb{R}^n \ \Big|\ 
	\limsup_{u\xrightarrow{\Omega}\bar x}
	\frac{\langle z,u-\bar x\rangle}{\|u-\bar x\|}
	\le 0
	\Bigr\}.
	\]
	
	The \emph{limiting/Mordukhovich normal cone} to $\Omega$ at $\bar x$ is defined as
	the Painlevé--Kuratowski outer limit of regular normal cones, that is,
	\[
	N_\Omega(\bar x)
	:=
	\Limsup_{x\xrightarrow{\Omega}\bar x}\widehat N_\Omega(x)
	=
	\Bigl\{
	z\in\mathbb{R}^n \ \Big|\
	\exists\, x^k\to\bar x,\ z^k\to z
	\ \text{with}\ 
	z^k\in\widehat N_\Omega(x^k)\ \forall k
	\Bigr\}.
	\]
}\end{Definition}

If $\bar x\notin\Omega$, we set
\[
\widehat N_\Omega(\bar x)=N_\Omega(\bar x)=\emptyset
\]
by convention.  
When $\Omega$ is convex, the Fréchet and Mordukhovich normal cones coincide and
reduce to the classical normal cone of convex analysis.

\begin{Definition}[see \cite{M18,RW98}]
{\rm	Let $\Phi:\mathbb{R}^n \rightrightarrows \mathbb{R}^n$ be a set-valued mapping with graph
	\[
	\operatorname{gph}\Phi := \{(x,y)\in\mathbb{R}^n\times\mathbb{R}^n \mid y\in\Phi(x)\}.
	\]
	
	The set-valued mapping $\widehat D^\ast\Phi(\bar x,\bar y):\mathbb{R}^n\rightrightarrows\mathbb{R}^n$ defined by
	\[
	\widehat D^\ast\Phi(\bar x,\bar y)(w)
	:=
	\bigl\{
	v\in\mathbb{R}^n \ \big|\ (v,-w)\in \widehat N_{\operatorname{gph}\Phi}(\bar x,\bar y)
	\bigr\},
	\qquad w\in\mathbb{R}^n,
	\]
	is called the \emph{ regular/Fréchet coderivative} of $\Phi$ at $(\bar x,\bar y)$.
	
	Similarly, the multifunction $D^\ast\Phi(\bar x,\bar y):\mathbb{R}^n\rightrightarrows\mathbb{R}^n$ given by
	\[
	D^\ast\Phi(\bar x,\bar y)(w)
	:=
	\bigl\{
	v\in\mathbb{R}^n \ \big|\ (v,-w)\in N_{\operatorname{gph}\Phi}(\bar x,\bar y)
	\bigr\},
	\qquad w\in\mathbb{R}^n,
	\]
	is referred to as the \emph{limiting/Mordukhovich coderivative} of $\Phi$ at $(\bar x,\bar y)$.
}\end{Definition}
In the single-valued case $\Phi(\bar x)=\{\bar y\}$, we simply write 
$\widehat D^\ast\Phi(\bar x)$ and $D^\ast\Phi(\bar x)$ instead of 
$\widehat D^\ast\Phi(\bar x,\bar y)$ and $D^\ast\Phi(\bar x,\bar y)$, respectively. 
If $\Phi:\R^n\to \R^n$ is strictly Fréchet differentiable at $\bar x$, then
\[
D^\ast\Phi(\bar x)(y)
=
\widehat D^\ast\Phi(\bar x)(y)
=
\{\nabla\Phi(\bar x)^* y\},
\qquad \forall\, y\in \R^n.
\]

	The following result is given in \cite{H24(2)}, which provides a useful formula to calculate the Fréchet coderivatives of single-valued mappings on Hilbert spaces.
	\begin{Lemma} {\rm (see \cite{H24(2)})}\label{Lem1}
		Let  $f: \R^n\rightarrow \R^n$ be a Lipschitz  continuous mapping on $\R^n$. Then, the Fréchet coderivatives of $f$ at a point $\ox\in \R^n$ satisfies that, for any $y\in \R^n$,
		$$\widehat D^\ast f(\ox)(y)=\left\{z\in \R^n\big|\limsup\limits_{u\rightarrow \ox}\dfrac{\la z,u-\ox\ra-\la y,f(u)-f(\ox)\ra}{\|u-\ox\|} \leq 0 \right\}.$$
	\end{Lemma}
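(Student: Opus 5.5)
We prove Lemma~\ref{Lem1} by unwinding the definition of the Fréchet normal cone to $\operatorname{gph} f$ at $(\ox,f(\ox))$ and using the Lipschitz continuity of $f$ to trade the norm on the graph for the norm on the domain. By the definition of the regular coderivative, $z\in\widehat D^\ast f(\ox)(y)$ if and only if $(z,-y)\in\widehat N_{\operatorname{gph} f}(\ox,f(\ox))$. Since $f$ is single-valued, every point of the graph has the form $(u,f(u))$, and $(u,f(u))\to(\ox,f(\ox))$ exactly when $u\to\ox$, by continuity of $f$. So the membership condition reads
\[
\limsup_{u\to\ox}\frac{\la z,u-\ox\ra-\la y,f(u)-f(\ox)\ra}{\|(u-\ox,f(u)-f(\ox))\|}\le 0 .
\]
It remains to show that this condition is equivalent to the same condition with the denominator replaced by $\|u-\ox\|$.

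Let $L$ be a Lipschitz constant of $f$, and write $N(u):=\la z,u-\ox\ra-\la y,f(u)-f(\ox)\ra$. For $u\neq\ox$ we have
\[
\|u-\ox\|\le \|(u-\ox,f(u)-f(\ox))\|\le \sqrt{1+L^2}\,\|u-\ox\| .
\]
Put $r(u):=\|u-\ox\|/\|(u-\ox,f(u)-f(\ox))\|$, so that $r(u)\in[(1+L^2)^{-1/2},1]$. Then the graph quotient equals $N(u)/\|u-\ox\|$ multiplied by $r(u)$.

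The key observation is that multiplying by a factor confined to a fixed positive interval preserves the sign of a limsup.
\begin{itemize}
\item Suppose the domain-normalized limsup is $\le 0$. For every $\varepsilon>0$, when $u$ is close to $\ox$ we have $N(u)/\|u-\ox\|\le\varepsilon$. Since $0<r(u)\le 1$, it follows that $r(u)N(u)/\|u-\ox\|\le\varepsilon$, so the graph limsup is $\le 0$.
\item Conversely, suppose the graph limsup is $\le 0$. Near $\ox$ the graph quotient is $\le\varepsilon$. Since $r(u)\ge(1+L^2)^{-1/2}$, this gives $N(u)/\|u-\ox\|\le\sqrt{1+L^2}\,\varepsilon$. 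Letting $\varepsilon\downarrow0$ yields the claim.
\end{itemize}
This establishes the equality of the two sets.

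The only step needing care is the two-sided comparison of the norms, and it is exactly where the Lipschitz hypothesis enters. Without the upper bound $\sqrt{1+L^2}$, the factor $r(u)$ could tend to $0$, and the converse implication would fail. Continuity of $f$ at $\ox$ is also implicitly needed to identify graph convergence with $u\to\ox$, and this too follows from the Lipschitz assumption. Since local Lipschitz continuity near $\ox$ would suffice, the global assumption is more than enough.
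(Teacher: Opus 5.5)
Your proof is correct. The paper gives no proof of this lemma: it imports it from \cite{H24(2)} and then applies it with $f=P_K$, which is nonexpansive ($L=1$), in the proof of Theorem~\ref{Thm1}. So there is no competing argument to compare against. Your route is a natural self-contained verification: you identify convergence along the graph with $u\to\bar x$, then use the two-sided bound $\|u-\bar x\|\le\|(u-\bar x,f(u)-f(\bar x))\|\le\sqrt{1+L^2}\,\|u-\bar x\|$.

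The two rescaling steps in your bullets are valid precisely because $\varepsilon>0$: a negative quotient stays negative after multiplying or dividing by $r(u)>0$.

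As you note, only calmness of $f$ at $\bar x$ is actually used. Your remark that the second implication can fail without it is also right. For $f(u)=\sqrt{|u|}$ at $\bar x=0$ with $y=0$, every $z\in\mathbb{R}$ satisfies the graph condition, while the domain-normalized condition forces $z=0$.
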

	
Let $n \in \mathbb{N}^*$ be fixed and set $N := \{1,\ldots,n\}$.  
For any $i,j \in N$, the notation $\delta_{i,j}$ denotes the Kronecker delta, that is,
\[
\delta_{i,j} =
\begin{cases}
	1, & \text{if } i=j,\\
	0, & \text{if } i\ne j.
\end{cases}
\]
\begin{Definition}
	{\rm (see \cite{NN10,R17,IN90}) Let $b_1, b_2, \ldots, b_n \in \mathbb{R}^n$ be linearly independent vectors.  
		The cone $K$ defined by
		\[
		K := \operatorname{cone}\{b_1, b_2, \ldots, b_n\}
		= \left\{
		x \in \mathbb{R}^n \ \middle|\ 
		x = \sum_{i=1}^n \alpha_i b_i,\ \alpha_i \ge 0 \text{ for } i\in N
		\right\}
		\]
		is called a \emph{latticial cone}.  
		In this case, we say that $K$ is generated by the vectors $b_1, b_2, \ldots, b_n$.  
		Every latticial cone is closed and convex.
}	\end{Definition}

For the reader’s convenience, we briefly recall some key properties of projections onto latticial cones, following \cite{M65,NN10,R17}.	

\begin{Lemma}[see \cite{NN10,R17}]
	Let $K \subset \mathbb{R}^n$ be a latticial cone generated by linearly independent vectors $b_1, b_2, \ldots, b_n$.  
	Then the polar cone of $K$ admits the representation
	\[
	K^\circ = \Big\{ \sum_{i=1}^n \mu_i u_i \ \Big|\ \mu_i \ge 0,\, i \in N \Big\},
	\]
	where for each $j \in N$, the vector $u_j$ is determined by the relations
	\[
	\langle u_j, b_i \rangle = -\delta_{ij}, \quad i \in N.
	\]
	Since $K$ is closed and convex, it follows that
	\[
	K = \{x \in \mathbb{R}^n \mid \langle y, x \rangle \le 0 \ \text{for all } y \in K^\circ\}.
	\]
	Consequently, by the above characterization,
	\begin{equation}\label{eq:K_condition}
		x \in K \quad \Longleftrightarrow \quad \langle x, u_i \rangle \le 0, \ \forall i \in N.
	\end{equation}
\end{Lemma}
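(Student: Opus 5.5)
The plan is to pass to the dual basis and reduce every claim to elementary linear algebra. Let $B$ be the matrix with columns $b_1,\dots,b_n$; it is invertible because the $b_i$ are linearly independent. Let $U$ be the matrix with columns $u_1,\dots,u_n$. The defining relations $\langle u_j,b_i\rangle=-\delta_{ij}$ say $U^{T}B=-I$, so $U=-(B^{-1})^{T}$. In particular the $u_j$ exist, are unique, and are linearly independent.

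First I show that $K^\circ$ equals the cone $\operatorname{cone}\{u_1,\dots,u_n\}$. Recall $K^\circ=\{y\mid \langle y,x\rangle\le 0\ \forall x\in K\}$. Since $K$ consists of the nonnegative combinations of the $b_i$, a vector $y$ lies in $K^\circ$ if and only if $\langle y,b_i\rangle\le 0$ for every $i\in N$.

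Because $\{u_j\}$ is a basis, I can write any $y\in\mathbb{R}^n$ as $y=\sum_j\mu_j u_j$. Then $\langle y,b_i\rangle=\sum_j\mu_j\langle u_j,b_i\rangle=-\mu_i$. Hence $y\in K^\circ$ if and only if every $\mu_i\ge 0$, which is exactly the claimed representation.

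Second, $K$ is closed and convex. It is closed because it is the image of the closed orthant $\mathbb{R}^n_+$ under the invertible linear map $B$; convexity is clear. The bipolar theorem therefore gives $K=K^{\circ\circ}=\{x\mid\langle y,x\rangle\le 0\ \forall y\in K^\circ\}$.

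Third, I substitute the representation of $K^\circ$ into this description. By linearity, together with $\mu_i\ge 0$, the condition $\langle y,x\rangle\le 0$ for all $y\in K^\circ$ holds if and only if $\langle u_i,x\rangle\le 0$ for every generator $u_i$. This gives \eqref{eq:K_condition}.

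As a direct check that avoids the bipolar theorem, expand $x=\sum_i\alpha_i b_i$ in the basis $\{b_i\}$. Then $\langle x,u_j\rangle=-\alpha_j$, so $x\in K$ if and only if every $\alpha_j\ge 0$, which holds if and only if every $\langle x,u_j\rangle\le 0$.

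No step is a serious obstacle. The only point needing care is the use of linear independence, which guarantees both the invertibility of $B$ and the fact that the $u_j$ form a basis; this is what makes the coefficient identification $\langle y,b_i\rangle=-\mu_i$ legitimate.
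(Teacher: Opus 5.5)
Your proof is correct and follows the same chain the paper sketches inside the lemma: the representation of $K^\circ$ (which the paper takes from its references and you verify by expanding $y$ in the dual basis, where $\langle y,b_i\rangle=-\mu_i$), then the bipolar theorem for the closed convex cone $K$, then testing against the generators $u_i$. Your closing direct check via $\langle x,u_j\rangle=-\alpha_j$ is a valid shortcut that makes the bipolar step unnecessary, but your main argument matches the paper's.
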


\begin{Lemma} \label{lm28}[see \cite{NN10}]
	For any subset of indices $I \subset N$, the family of vectors 
	\[
	\{b_i \mid i \in I\} \cup \{u_j \mid j \in N \setminus I\}
	\]
	is linearly independent.
\end{Lemma}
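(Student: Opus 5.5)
The approach is a dimension count followed by a kernel argument that uses the defining relations $\langle u_j,b_i\rangle=-\delta_{ij}$ to split the linear combination into two pieces.

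\textbf{Step 1: reduce to a kernel statement.} Let $I\subset N$ and write $J:=N\setminus I$. The family $\{b_i\mid i\in I\}\cup\{u_j\mid j\in J\}$ has exactly $n$ vectors in $\mathbb{R}^n$. So it suffices to show that a vanishing linear combination has all coefficients zero. Suppose
\[
\sum_{i\in I}\alpha_i b_i+\sum_{j\in J}\beta_j u_j=0 .
\]

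\textbf{Step 2: the $b$-coefficients vanish.} For each $i'\in I$, take the inner product of this relation with $u_{i'}$. We do not control $\langle b_i,u_{i'}\rangle$ beyond $\langle u_{i'},b_i\rangle=-\delta_{ii'}$, which is exactly what we need. However, the inner products $\langle u_j,u_{i'}\rangle$ are not known. So pairing with $u$'s is awkward, and instead I pair with the $b$'s.

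For $k\in J$, take the inner product with $b_k$. Since $k\notin I$, we get $\langle b_i,b_k\rangle$ terms, which are again unknown. The clean route is different: set $v:=\sum_{i\in I}\alpha_i b_i=-\sum_{j\in J}\beta_j u_j$. Pairing the second expression with $b_i$ for $i\in I$ gives
\[
\langle v,b_i\rangle=-\sum_{j\in J}\beta_j\langle u_j,b_i\rangle=\sum_{j\in J}\beta_j\delta_{ij}=0,
\]
because $i\notin J$. Then
\[
\|v\|^2=\sum_{i\in I}\alpha_i\langle v,b_i\rangle=0,
\]
so $v=0$. Since the $b_i$ are linearly independent, all $\alpha_i=0$.

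\textbf{Step 3: the $u$-coefficients vanish.} From $v=0$ we have $\sum_{j\in J}\beta_j u_j=0$. Pairing with $b_k$ for $k\in J$ gives $-\beta_k=0$. This also shows that the $u_j$ are linearly independent. Hence all coefficients are zero, and the family is a basis.

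\textbf{Main obstacle.} The only real difficulty is avoiding the unknown Gram entries $\langle b_i,b_k\rangle$ and $\langle u_i,u_j\rangle$. The trick in Step 2 handles this: it splits the relation into the common vector $v$, which lies in both $\operatorname{span}\{b_i\mid i\in I\}$ and $\operatorname{span}\{u_j\mid j\in J\}$, and shows that $v$ is orthogonal to itself. The edge cases $I=\emptyset$ and $I=N$ are covered by the same argument, with empty sums equal to $0$.
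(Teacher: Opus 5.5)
Your proof is correct. The paper does not prove this lemma itself; it only quotes it from \cite{NN10}. So your argument supplies a self-contained justification rather than an alternative to a proof in the text.

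The core of your argument is the observation that $\langle u_j,b_i\rangle=-\delta_{ij}=0$ whenever $i\in I$ and $j\in N\setminus I$. Hence $\operatorname{span}\{b_i\mid i\in I\}$ and $\operatorname{span}\{u_j\mid j\in N\setminus I\}$ are mutually orthogonal and meet only in $0$. Your vector $v$ lies in both spans, so $v=0$. Independence then follows from the independence of the $b_i$, and from pairing with $b_k$, $k\in N\setminus I$, to recover each $\beta_k=0$.

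Nothing in the argument uses the isotone condition $\langle u_j,u_k\rangle\le 0$ or any Gram entries. This fits the lemma being stated for arbitrary latticial cones. Your closing remark that the family has exactly $n$ members is what upgrades independence to the basis property actually used in the proof of Theorem~\ref{Thm11}.

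For a clean write-up, make two small edits. First, delete the exploratory false starts at the beginning of Step 2, where you try pairing with $u_{i'}$ and with $b_k$. Second, note that the reduction in Step 1 is simply the definition of linear independence; the count of $n$ vectors matters only for the final ``basis'' conclusion.
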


\begin{Lemma}[see \cite{M65}, Moreau decomposition theorem]
	Let $K \subset \mathbb{R}^n$ be a closed convex cone and $x \in \mathbb{R}^n$.  
	The following statements are equivalent:
	\begin{enumerate}[(i)]
		\item $x = y + z$ with $y \in K$, $z \in K^\circ$, and $\langle y, z \rangle = 0$;
		\item $y = P_K(x)$ and $z = P_{K^\circ}(x)$.
	\end{enumerate}
\end{Lemma}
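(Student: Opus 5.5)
We prove the equivalence of (i) and (ii) by the variational characterization of the metric projection onto a closed convex set: for $y\in K$, $y=P_K(x)$ if and only if $\langle x-y, w-y\rangle\le 0$ for all $w\in K$. Because $K$ is a cone, this reduces to two conditions, obtained by taking $w=0$ and $w=2y$ and then $w=y+k$ with $k\in K$:
\[
\langle x-y, y\rangle = 0 \quad\text{and}\quad \langle x-y,k\rangle\le 0 \ \ \forall k\in K .
\]
The second condition says exactly that $x-y\in K^\circ$. So $y=P_K(x)$ if and only if $y\in K$, $x-y\in K^\circ$, and $\langle y,x-y\rangle=0$.

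\emph{(ii)$\Rightarrow$(i).} Put $y=P_K(x)$ and $z:=x-y$. The characterization gives $z\in K^\circ$ and $\langle y,z\rangle=0$. It remains to identify $z$ with $P_{K^\circ}(x)$. We apply the same characterization to the closed convex cone $K^\circ$, using $K^{\circ\circ}=K$ (bipolar theorem). The point $z$ satisfies $z\in K^\circ$, $x-z=y\in K=(K^\circ)^\circ$, and $\langle z,x-z\rangle=0$. Hence $z=P_{K^\circ}(x)$, so $x=P_K(x)+P_{K^\circ}(x)$ with the required orthogonality.

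\emph{(i)$\Rightarrow$(ii).} Suppose $x=y+z$ with $y\in K$, $z\in K^\circ$ and $\langle y,z\rangle=0$. Then $x-y=z\in K^\circ$ and $\langle y,x-y\rangle=0$, so the characterization yields $y=P_K(x)$. Symmetrically, $x-z=y\in K=K^{\circ\circ}$ and $\langle z,x-z\rangle=0$, so $z=P_{K^\circ}(x)$.

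The only step that needs care is the reduction of the variational inequality to the two cone conditions. Testing with $w=0$ and $w=2y$ forces $\langle x-y,y\rangle=0$. Then testing with $w=y+k$, which lies in $K$ because $K$ is a convex cone, gives the polarity condition. The converse direction of this reduction is immediate, since $\langle x-y,w-y\rangle=\langle x-y,w\rangle-\langle x-y,y\rangle\le 0$ for every $w\in K$. Everything else follows from the symmetry $K^{\circ\circ}=K$.
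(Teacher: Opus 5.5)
Your proof is correct and complete. The paper gives no proof of its own and simply cites Moreau. In Moreau's proximal framework the lemma is the special case $f=\iota_K$, $f^\ast=\iota_{K^\circ}$ of the decomposition $x=\operatorname{prox}_f(x)+\operatorname{prox}_{f^\ast}(x)$, with orthogonality coming from the Fenchel equality $f(p)+f^\ast(q)=\langle p,q\rangle$.

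Your route is the standard direct one. You specialize the variational inequality for $P_K$ to a cone via the test points $w=0$, $w=2y$, $w=y+k$. This gives the characterization $y=P_K(x)\iff y\in K,\ x-y\in K^\circ,\ \langle y,x-y\rangle=0$, and the symmetric roles of $K$ and $K^\circ$ then settle both directions. It is self-contained and needs no conjugate-duality machinery, which suits how the lemma is used in the paper.

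Two minor points are worth tightening:
\begin{enumerate}
\item In (ii)$\Rightarrow$(i) you redefine $z:=x-y$. You should say explicitly that single-valuedness of $P_{K^\circ}$ identifies $x-P_K(x)$ with the $z$ given in (ii).
\item You only ever use the trivial inclusion $K\subset K^{\circ\circ}$, which is immediate from the definition of the polar. So the bipolar theorem is not actually needed.
\end{enumerate}
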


\begin{Lemma} {\rm(see \cite[Theorem~2]{NN10}, \cite[Theorem~4]{R17}) \label{lem1}}
Let $K=\operatorname{cone}\{b_1,\ldots,b_n\}$ be a lattice cone generated by linearly independent vectors $b_1,\ldots,b_n$. 
Let $\{u_j\}_{j\in N}$ be the generators of the polar cone $K^\circ$ determined by
\[
\langle u_j, b_i \rangle = -\delta_{i,j}, \qquad i,j\in N.
\]
Then, for every $x\in \mathbb{R}^n$ and any subset $I\subset N$, the vector $x$ admits a unique representation of the form
\begin{equation}\label{32}
	x = \sum_{i\in N\setminus I} \alpha_i b_i + \sum_{j\in I} \beta_j u_j.
\end{equation}
Moreover, there exists a unique subset $I\subset N$ such that the coefficients in \eqref{32} satisfy
\[
\beta_j > 0 \quad \text{for all } j\in I, 
\qquad 
\alpha_i \ge 0 \quad \text{for all } i\in N\setminus I.
\]
In this case, the metric projection of $x$ onto $K$ is given by
\[
P_K(x) = \sum_{i\in N\setminus I} \alpha_i b_i.
\]
\end{Lemma}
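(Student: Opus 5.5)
The natural route is the Moreau decomposition (the lemma above) combined with the biorthogonality relations $\langle u_j,b_i\rangle=-\delta_{ij}$.

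\textbf{Uniqueness of the representation \eqref{32} for a fixed $I$.} By Lemma~\ref{lm28}, the $n$ vectors $\{b_i\}_{i\in N\setminus I}\cup\{u_j\}_{j\in I}$ are linearly independent in $\mathbb{R}^n$, hence form a basis. Every $x$ therefore has exactly one expansion of the form \eqref{32}, and nothing more is needed for this part.

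\textbf{Existence of a good index set.} Put $y:=P_K(x)$ and $z:=P_{K^\circ}(x)$. By the Moreau decomposition, $x=y+z$, $y\in K$, $z\in K^\circ$ and $\langle y,z\rangle=0$.
\begin{itemize}
\item Write $y=\sum_i\alpha_ib_i$ with $\alpha_i\ge0$ and $z=\sum_j\mu_ju_j$ with $\mu_j\ge0$, using the representation of $K^\circ$ from the polar-cone lemma.
\item Biorthogonality gives $\langle y,z\rangle=-\sum_i\alpha_i\mu_i$. Since each term $\alpha_i\mu_i$ is nonnegative and the sum vanishes, $\alpha_i\mu_i=0$ for every $i$.
\item Set $I:=\{j:\mu_j>0\}$. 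Then $\alpha_i=0$ for $i\in I$, so $y=\sum_{i\in N\setminus I}\alpha_ib_i$ and $z=\sum_{j\in I}\mu_ju_j$.
\end{itemize}
This is a representation of type \eqref{32} with $\beta_j=\mu_j>0$ on $I$ and $\alpha_i\ge0$ off $I$. The projection formula $P_K(x)=\sum_{i\in N\setminus I}\alpha_ib_i$ follows directly.

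\textbf{Uniqueness of $I$.} Suppose $I'$ is another index set whose (unique) coefficients satisfy $\beta'_j>0$ on $I'$ and $\alpha'_i\ge0$ off $I'$. Define
\[
y':=\sum_{i\notin I'}\alpha'_ib_i,\qquad z':=\sum_{j\in I'}\beta'_ju_j .
\]
Then $y'\in K$, $z'\in K^\circ$, and $\langle y',z'\rangle=-\sum_{i\notin I',\,j\in I'}\alpha'_i\beta'_j\delta_{ij}=0$, because the two index sets are disjoint. By the equivalence (i)$\Leftrightarrow$(ii) of Moreau's theorem, $y'=P_K(x)=y$ and $z'=P_{K^\circ}(x)=z$.

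Now compare the two expansions of $z$. The vectors $u_1,\dots,u_n$ are linearly independent: they are $-1$ times the dual basis of $b_1,\dots,b_n$. Hence the expansion of $z$ in $\{u_j\}$ is unique. From $z=\sum_{j\in I'}\beta'_ju_j=\sum_{j\in I}\mu_ju_j$ we get $\beta'_j=\mu_j$, with zero coefficients outside the respective sets. Since both $\beta'$ and $\mu$ are strictly positive exactly on their index sets, $I'=\{j:\mu_j>0\}=I$.

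\textbf{Main subtlety.} The key point is to define $I$ through the strict positivity of the polar coefficients, not through the zeros of the $\alpha_i$. Indices with $\alpha_i=\mu_i=0$ are placed outside $I$, and this is exactly why the statement requires $\beta_j>0$ but only $\alpha_i\ge0$. With this convention, uniqueness of $I$ reduces to uniqueness of the coordinates of $z$ in the basis $\{u_j\}$.
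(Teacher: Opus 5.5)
The paper gives no proof of this lemma; it imports it from \cite[Theorem~2]{NN10} and \cite[Theorem~4]{R17}, so there is no internal argument to compare with. Your proof is correct and complete relative to the facts stated just before it: the generator description of $K^\circ$, Lemma~\ref{lm28}, and Moreau's theorem.
\begin{itemize}
\item Existence follows from (ii)$\Rightarrow$(i) of Moreau's theorem together with $\langle y,z\rangle=-\sum_i\alpha_i\mu_i$.
\item Uniqueness of $I$ follows from (i)$\Rightarrow$(ii) together with uniqueness of the coordinates of $P_{K^\circ}(x)$ in the basis $\{u_j\}$.
\item Taking $I$ to be the strict support of $\mu$ is exactly what the strict inequality $\beta_j>0$ in the statement requires.
\end{itemize}

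Two small additions would make the argument fully self-contained and more useful for the rest of the paper.

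First, Lemma~\ref{lm28} has a one-line proof you could include instead of citing it. If $v:=\sum_{i\in N\setminus I}a_ib_i=-\sum_{j\in I}c_ju_j$, then $\|v\|^2=\sum_{i\in N\setminus I,\,j\in I}a_ic_j\delta_{ij}=0$. Hence $v=0$, and all coefficients vanish.

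Second, your uniqueness step actually shows more: any representation \eqref{32} with $\alpha_i\ge 0$ and $\beta_j\ge 0$ (strictness not needed) already yields $P_K(x)=\sum_{i\in N\setminus I}\alpha_ib_i$. This sufficiency direction, rather than the existence part, is what the paper uses later. For example, in the proof of Theorem~\ref{df} one checks the sign pattern of an explicit representation and reads off $P_K$.
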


	\section{Coderivative of the Metric Projection onto the Isotone Projection Cones in $\R^n$}
In this section, we investigate projection operators onto isotone projection cones and recall the key structural properties of these cones, which will be essential for the coderivative analysis.
\begin{Definition}[see \cite{NN10,IN90,IN92}]
{\rm	A closed generating cone $K\subset\mathbb{R}^n$ is called an
	\emph{isotone projection cone} if for all $x,y\in\mathbb{R}^n$ with $y-x\in K$, one has
	\[
	P_K(y)-P_K(x)\in K.
	\]
	Equivalently, if $x\preceq y$ with respect to the partial order induced by $K$,
	then the metric projection $P_K$ preserves this order.
	
	Moreover, it is known from \cite[Theorem~8]{IN92} that a cone $K$ is an isotone
	projection cone in $\mathbb{R}^n$ if and only if it is a latticial cone generated
	by a system $\{b_i\}_{i\in N}$ with the associated vectors $\{u_j\}_{j\in N}$
	satisfying
	\[
	\langle u_j,u_k\rangle \le 0,
	\qquad \forall\, j\neq k.
	\]
}\end{Definition}
To further describe the structure of isotone projection cones, we recall some notions from matrix theory that will be used in the sequel. 
A real square matrix is called a \emph{Z-matrix} if all of its off-diagonal entries are nonpositive. 
An \emph{M-matrix} is a Z-matrix whose eigenvalues are positive. 
In particular, a symmetric matrix is an M-matrix if and only if it is a positive semidefinite Z-matrix.

As a first result, we provide a characteristic representation of elements in
$\mathbb{R}^n$ in terms of index sets associated with a given reference point.

\begin{Theorem} \label{Thm11}
		Let $K \subset \mathbb{R}^n$ be an isotone projection cone generated by linearly independent vectors 
	$b_1,\ldots,b_n$, and let $u_1,\ldots,u_n$ be the
	associated vectors satisfying $\langle b_i,u_j\rangle=-\delta_{ij}$.
	Assume that $I_1, I_2,$ and $I_3$ are pairwise disjoint subsets of $N$ such that
	$I_1\cup I_2\cup I_3 = N$.
	Then, for every $x\in\mathbb{R}^n$, there exist two disjoint subsets
	$A_x$ and $B_x$ of $I_2$ such that $x$ admits the representation
	\begin{equation} \label{ccc}
		x
		=
		\sum_{i\in I_1} \alpha_i b_i
		+
		\sum_{i\in A_x} \alpha_i b_i
		+
		\sum_{j\in B_x} \beta_j u_j
		+
		\sum_{j\in I_3} \beta_j u_j,
	\end{equation}
	with $\alpha_i>0$ for all $i\in A_x$ and $\beta_j>0$ for all $j\in B_x$.
\end{Theorem}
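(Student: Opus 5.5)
The plan is to obtain \eqref{ccc} from the Moreau decomposition of Lemma~\ref{lem1}'s setting, applied not to $K$ itself but to an auxiliary polyhedral cone that "frees" the indices in $I_1$ and $I_3$. Specifically, I would introduce
\[
C:=\Bigl\{\sum_{i\in I_1}\alpha_i b_i+\sum_{i\in I_2}\alpha_i b_i \ \Big|\ \alpha_i\in\mathbb{R}\ (i\in I_1),\ \alpha_i\ge 0\ (i\in I_2)\Bigr\}.
\]
Using the biorthogonality $\langle b_i,u_j\rangle=-\delta_{ij}$, every $y\in\mathbb{R}^n$ can be written as $y=\sum_{i\in N}\alpha_i b_i$ with $\alpha_i=-\langle y,u_i\rangle$. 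This gives the inequality description
\[
C=\{y\mid \langle y,u_i\rangle\le 0\ (i\in I_2),\ \langle y,u_i\rangle=0\ (i\in I_3)\}.
\]
Hence $C$ is a closed convex polyhedral cone. By Farkas' lemma its polar is
\[
C^\circ=\operatorname{cone}\{u_j\mid j\in I_2\}+\operatorname{span}\{u_j\mid j\in I_3\}.
\]

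Next I apply the Moreau decomposition theorem to $C$: for $x\in\mathbb{R}^n$ we have $x=P_C(x)+P_{C^\circ}(x)$ with $\langle P_C(x),P_{C^\circ}(x)\rangle=0$. Then I write
\[
P_C(x)=\sum_{i\in I_1\cup I_2}\alpha_i b_i \quad\text{with } \alpha_i\ge0 \text{ on } I_2,
\qquad
P_{C^\circ}(x)=\sum_{j\in I_2\cup I_3}\beta_j u_j \quad\text{with } \beta_j\ge 0 \text{ on } I_2.
\]
Both representations are unique, because the $b_i$ are linearly independent and the $u_j$ are linearly independent (Lemma~\ref{lm28} with $I=\emptyset$). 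The candidate index sets are
\[
A_x:=\{i\in I_2\mid \alpha_i>0\},\qquad B_x:=\{j\in I_2\mid \beta_j>0\}.
\]
Indices of $I_2$ lying in neither set have zero coefficient in both sums and can be dropped. Doing so yields exactly the form \eqref{ccc}, with the required strict positivity.

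The key step is to show $A_x\cap B_x=\emptyset$. For this I use the standard description of the normal cone to a polyhedral cone:
\[
x-P_C(x)\in N_C(P_C(x))=\operatorname{cone}\{u_j\mid j\in I_2,\ \langle P_C(x),u_j\rangle=0\}+\operatorname{span}\{u_j\mid j\in I_3\}.
\]
By uniqueness of the expansion of $P_{C^\circ}(x)=x-P_C(x)$ in the independent system $\{u_j\}$, any $j\in I_2$ with $\beta_j>0$ must be an active index. Active means $\langle P_C(x),u_j\rangle=0$, i.e.\ $\alpha_j=-\langle P_C(x),u_j\rangle=0$. Thus $j\notin A_x$.

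Alternatively, the complementarity relation can be expanded directly:
\[
0=\langle P_C(x),P_{C^\circ}(x)\rangle=-\sum_{j\in I_2}\alpha_j\beta_j .
\]
The contributions from $I_1$ and $I_3$ vanish, since $\langle P_C(x),u_j\rangle=0$ for $j\in I_3$ and $\langle b_i,u_j\rangle=0$ for $i\in I_1$, $j\in I_2\cup I_3$. As $\alpha_j,\beta_j\ge0$ on $I_2$, every term $\alpha_j\beta_j$ is zero, which gives disjointness immediately. I regard this cleaner computation as the main verification. Note that the isotonicity condition $\langle u_j,u_k\rangle\le0$ is not needed for this statement; only the latticial structure is used.
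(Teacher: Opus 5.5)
Your proof is correct, and it takes a genuinely different route from the paper.

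The paper uses a finite pivoting procedure. It starts from the basis $\{b_i\}_{i\in I_1\cup I_2}\cup\{u_j\}_{j\in I_3}$, repeatedly moves the indices of $I_2$ that carry negative $b$-coefficients to the $u$-side, and re-expands $x$. The key fact at each step is that the $u$-coefficients of $-b_k$ form the column $G^{-1}e_k$ of the inverse Gram matrix of a subfamily of $\{u_j\}$. This column is entrywise nonnegative with positive diagonal because $G$ is an M-matrix. That is exactly where isotonicity, $\langle u_j,u_k\rangle\le 0$, is used. What it buys is an explicit algorithm (part (iii) of the following remark) along which the $I_3$-coefficients never decrease.

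You instead recognize \eqref{ccc} as the Moreau decomposition of $x$ relative to the auxiliary cone $C=\operatorname{span}\{b_i\}_{i\in I_1}+\operatorname{cone}\{b_i\}_{i\in I_2}$. The single identity $\langle P_C(x),P_{C^\circ}(x)\rangle=-\sum_{j\in I_2}\alpha_j\beta_j=0$ then gives disjointness. This is shorter and, as you say, works for every latticial cone, so the restrictions listed in part (iv) of that remark are unnecessary.

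Your argument also gives more than is stated. Any representation of the form \eqref{ccc} writes $x$ as an element of $C$ plus an orthogonal element of $C^\circ$, so by Moreau's theorem it coincides with yours. This proves the uniqueness that the paper's last paragraph claims but supports only by the determinism of its construction. Moreover, the coefficients $\alpha_i=-\langle P_C(x),u_i\rangle$ and $\beta_j=-\langle P_{C^\circ}(x),b_j\rangle$ are Lipschitz in $x$. That is precisely the ``continuity of the coefficients'' invoked without justification in the proof of the next lemma.

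The price is that your argument is not constructive, since computing $P_C(x)$ is itself a quadratic program, whereas the paper's procedure ends after finitely many linear solves.
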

\begin{proof}
	If $I_2 = \emptyset$, then the system
	\(
	\{b_i : i\in I_1\} \cup \{u_j : j\in I_3\}
	\)
	forms a basis of $\mathbb{R}^n$; consequently, condition 
	\eqref{ccc} is automatically satisfied.
	
	Now suppose that $I_2\neq\emptyset$.  
	By Lemma \ref{lm28}, the system
	\[
	\{b_i : i\in I_1\cup I_2\} \cup \{u_j : j\in I_3\}
	\]
	is a basis of $\mathbb{R}^n$, and therefore every $x\in\mathbb{R}^n$ admits a representation of the form
	\begin{equation}\label{1a}
		x
		=
		\sum_{i\in I_1} \alpha_i^0\, b_i
		+
		\sum_{i\in I_2} \alpha_i^0\, b_i
		+
		\sum_{j\in I_3} \beta_j^0\, u_j.
	\end{equation}
	
If \(\alpha_i^0 \ge 0\) for all \(i \in I_2\), then we obtain the representation \eqref{ccc}. Otherwise, we set
	\[
	A_x^1 := \{i\in I_2 : \alpha_i^0\ge 0\}, 
	\qquad 
B_x^1 := \{i\in I_2 : \alpha_i^0< 0\}.
	\]
Observe that, by Lemma \ref{lm28}, the family
\(
\{b_i \mid i\in I_1\cup A_x^1\}\cup \{u_j \mid j\in I_3\cup B_x^1\}
\)
constitutes a basis. Hence, for each \(k\in B_x^1\), one can write
	\[
	-b_k
	=
	\sum_{i\in I_1\cup A_x^1} \alpha_{k,i}^0\, b_i
	\;+\;
	\sum_{j\in I_3\cup  B_x^1} \beta_{k,j}^0\, u_j.
	\]
	For $l\in I_3\cup  B_x^1$,
	\[
	\begin{array}{rl}
		\delta_{k,l} &= -\langle b_k, u_l \rangle \\[0.3em]
		&= \displaystyle \sum_{i\in I_1\cup  A_x^1} \alpha_{k,i}^0\, \langle b_i, u_l\rangle
		\;+\;
		\sum_{j\in I_3\cup  B_x^1} \beta_{k,j}^0\, \langle u_j, u_l\rangle \\[0.6em]
		&= \displaystyle 
		\sum_{j\in I_3\cup  B_x^1} \beta_{k,j}^0\, \langle u_j, u_l\rangle.
	\end{array}
	\]
	This shows that 
	\[
\beta^0_k := (\beta_{k,j}^0)^T_{j \in I_3 \cup  B_x^1}
	\]
	is a solution to the linear system
	\[
	G \beta_k^0 = e_k,
	\]
	where $G$ is the Gram matrix of the vectors $\{u_j\}_{j\in I_3 \cup  B_x^1}$ and 
	$e_k$ denotes the $k$-th canonical unit vector in $\mathbb{R}^{\,|I_3\cup B_x^1|}$.  
	Therefore,
	\[
	\beta^0_k = G^{-1} e_k.
	\]
	Since $G$ is an $M$-matrix, its inverse $G^{-1}$ is entrywise nonnegative. Consequently,
	\[
	\beta_{k,k}^0>0, \qquad \beta_{k,j}^0 \ge 0 \qquad \text{for all } j \in I_3\cup B_x^1.
	\]
	
	Substituting this expression into \eqref{1a} yields
	\begin{equation}\label{2}
		\begin{array}{rl}	x
			&=
			\displaystyle \sum_{i\in I_1} \alpha_i^0\, b_i
			\;+\;
			\sum_{i\in  A_x^1} \alpha_i^0\, b_i
			\;+\;
			\sum_{k\in B_x^1} \alpha_k^0\, b_k
			\;+\;
			\sum_{j\in  I_3} \beta_j^0\, u_j.\\
			&=\displaystyle\sum_{i\in I_1} \alpha_i^0\, b_i
			\;+\;
			\sum_{i\in   A_x^1} \alpha_i^0\, b_i
			\;+\;
			\sum_{k\in  B_x^1} -\alpha_k^0\, \left(\sum_{i\in I_1\cup  A_x^1} \alpha_{k,i}^0\, b_i
			\;+\;\sum_{j\in  I_3\cup  B_x^1} \beta_{k,j}^0\, u_j\right)
			\;+\;
			\sum_{j\in  I_3} \beta_j^0\, u_j\\
			&=\displaystyle\sum_{i\in I_1} \alpha_i^1\, b_i
			\;+\;
			\sum_{i\in A_x^1} \alpha_i^1\, b_i
			\;+\;
			\sum_{j\in B_x^1} \beta_j^1\, u_j
			\;+\;
			\sum_{j\in  I_3} \beta_j^1\, u_j,\\
		\end{array}
	\end{equation}
	where
	\[
	\begin{aligned}
		\beta_j^1 &:= \sum_{k\in   B_x^1} (-\alpha_k^0)\beta_{k,j}^0, 
		& j\in B_x^1,\\[0.4em]
		\beta_j^1 &:= \beta_j^0 + \sum_{k\in B_x^1} (-\alpha_k^0)\beta_{k,j}^0, 
		& j\in I_3,\\[0.4em]
		\alpha_i^1 &:= \alpha_i^0 + \sum_{k\in I_1\cup A_x^1} (-\alpha_k^0)\alpha_{k,i}^0, 
		& i\in I_1\cup  A_x^1.
	\end{aligned}
	\]
	Observe that, after this procedure, we obtain
	\[
	\beta_j^1>0 \quad \text{for all } j\in  B_x^1,
	\]
	and
	\[
	\beta_j^1\ge \beta_j^0 \quad \text{for all } j\in I_3.
	\]
	Moreover, since \(B_x^1\neq\emptyset\), it follows that \( A_x^1\) is a proper subset of \(I_2\).

If \(\alpha_i^1\geq 0\) for all \(i\in A_x^1\), then we arrive at the representation \eqref{ccc}. Otherwise, define
\[
A_x^2:=\{i\in A_x^1 : \alpha_i^1\ge 0\}, 
\qquad 
B_x^2:=\{i\in A_x^1 : \alpha_i^1<0\}.
\]
Repeating the above procedure, we obtain the representation
\[
x=
\sum_{i\in I_1} \alpha_i^2 b_i
+
\sum_{i\in A_x^2} \alpha_i^2 b_i
+
\sum_{j\in B_x^2} \beta_j^2 u_j
+
\sum_{j\in  B_x^1} \beta_j^2 u_j
+
\sum_{j\in I_3} 
\beta_j^2 u_j.
\]
In this representation, we have
\[
\beta_j^2 >  0
\quad \text{for all } j\in A_x^2,
\]
and
\[
\beta_j^2\ge \beta_j^1>0
\quad \text{for all } j\in  B_x^1,
\qquad
\beta_j^2\ge \beta_j^1
\quad \text{for all } j\in I_3.
\]
Continuing this procedure, we obtain a strictly decreasing sequence of sets
\[
I_2 \supsetneq A_x^1 \supsetneq A_x^2 \supsetneq \cdots .
\]
Since \(I_2\) is finite, the process terminates after finitely many steps, say after \(m\) steps with \(m<|I_2|\). Consequently, we arrive at the representation
\[
x=
\sum_{i\in I_1} \alpha_i^{m} b_i
+
\sum_{i\in A_x^m} \alpha_i^{m} b_i
+
\sum_{j\in B_x^m} \beta_j^{m} u_j
+\sum_{j\in B_x^{m-1}} \beta_j^{m} u_j+...+
\sum_{j\in  B_x^1} \beta_j^m u_j
+
\sum_{j\in I_3} 
\beta_j^m u_j
\]
which satisfies
\[
\alpha_i^m\ge 0
\quad \text{for all } i\in A_x^m,
\]
and
\[
\beta_j^m>0
\quad \text{for all } j\in B_x^1\cup\cdots\cup B_x^m,
\qquad
\beta_j^m\ge \beta_j^{m-1}\ge...\ge \beta_j^0
\quad \text{for all } j\in I_3.
\]
Setting
\[
A_x:=\{i\in A_x^m \mid \alpha_i^m>0\}
\quad \text{and} \quad
B_x:=B_x^1\cup\cdots\cup B_x^m,
\]
we obtain the representation \eqref{ccc}.

For every collection of index sets \(I_1, I_2, I_3\) satisfying the assumptions of the theorem and every \(x\in \mathbb{R}^n\), the above construction uniquely determines the sequence of sets \(\{A_x^i\}_{i=1}^m\) together with the corresponding coefficients. Therefore, the representation \eqref{ccc} is unique.

\end{proof}

\begin{Remark}
	\rm
	\begin{enumerate}
		\item[(i)]
		If $I_2 = N$, then the representation~\eqref{ccc} coincides with
		\eqref{32}. Consequently, this representation yields the explicit
		formula for the projection $P_K(x)$ in this case.
		
		\item[(ii)]
		If $I_1=\varnothing$, then representation~\eqref{ccc} reduces to the
		projection onto the cone
		\[
		K_2:=\operatorname{cone}\{b_i : i\in I_2\},
		\]
		namely,
		\[
		P_{K_2}(x)=\sum_{i\in A_x} \alpha_i b_i .
		\]
		
		\item[(iii)]
		Based on the proof of Theorem~\ref{Thm11}, we obtain the following algorithm
		for constructing the representation~\eqref{ccc}.
		
		\medskip
		\noindent
		\textbf{Step 1 (Initialization).}
		Set
		\[
		A_x^{0}:=I_2,\qquad B_x^{0}:=\varnothing.
		\]
		Represent $x$ with respect to the basis
		\(\{b_i : i\in I_1\cup A_x^{0}\}\cup\{u_j : j\in I_3\}\), that is,
		\[
		x
		=
		\sum_{i\in I_1} \alpha_i^{0} b_i
		+
		\sum_{i\in A_x^{0}} \alpha_i^{0} b_i
		+
		\sum_{j\in I_3} \beta_j^{0} u_j .
		\]
		
		\medskip
		\noindent
		\textbf{Step 2 (Update).}
		Assume that at iteration $k\ge 1$, $x$ admits the representation
		\[
		x
		=
		\sum_{i\in I_1} \alpha_i^{k} b_i
		+
		\sum_{i\in A_x^{k}} \alpha_i^{k} b_i
		+	\sum_{j\in B_x^{k}} \beta_j^{k} u_j +...+	\sum_{j\in  B_x^{1}} \beta_j^{k} u_j +
		\sum_{j\in I_3} \beta_j^{k} u_j .
		\]
		Define the updated index sets by
		\[
		A_x^{k+1}:=\{\, i\in A_x^{k} : \alpha_i^{k}\ge 0 \,\},\qquad
		B_x^{k+1}:=\{\, i\in A_x^{k} : \alpha_i^{k}<0 \,\}.
		\]
		Then represent \(x\) with respect to the basis
		\(\{b_i : i\in I_1\cup A_x^{k+1}\}\cup\{u_j : j\in I_3\cup B_x^{k+1}\}\) as
		\[
	x
	=
	\sum_{i\in I_1} \alpha_i^{k+1} b_i
	+
	\sum_{i\in A_x^{k+1}} \alpha_i^{k+1} b_i
	+	\sum_{j\in B_x^{k+1}} \beta_j^{k+1} u_j +...+	\sum_{j\in  B_x^{1}} \beta_j^{k+1} u_j +
	\sum_{j\in I_3} \beta_j^{k+1} u_j .
	\]
		
		\medskip
		\noindent
		\textbf{Stopping criterion.}
		If $A_x^{k+1}=A_x^{k}$, the algorithm terminates. Otherwise, set
		$k:=k+1$ and return to \textbf{Step~2}.
		
		\medskip
		\noindent
		Since the sequence $\{A_x^{k}\}_{k\ge 1}$ is decreasing with respect to
		set inclusion, the algorithm terminates after finitely many iterations.
		At termination, the resulting expression yields the desired
		representation~\eqref{ccc}.
		
		\item[(iv)]
		If $K$ is an arbitrary latticial cone, representation~\eqref{ccc}
		remains valid whenever at least one of the following conditions holds:
		\[
		I_1=\emptyset,\qquad I_3=\emptyset,\qquad \text{or}\quad |I_2|\le 1 .
		\]
	\end{enumerate}
\end{Remark}

For each $x\in\mathbb{R}^n$, by Lemma~\ref{lem1}, there exists a unique subset $I\subset N$ such that in the decomposition \eqref{32} the coefficients satisfy
\[
\beta_j>0 \ (j\in I), \qquad \alpha_i\ge 0 \ (i\in N\setminus I).
\]
Define the index sets
\[
I_x^+ := \{\, i\in  N\setminus I \mid \alpha_i>0 \,\}, \qquad
I_x^\bullet := \{\, i\in  N\setminus I \mid \alpha_i=0 \,\}, \qquad
I_x^- := I.
\].

By applying Theorem~\ref{Thm11}, we obtain the following result.

\begin{Lemma}\label{lem2}
	Let $K \subset \mathbb{R}^n$ be an isotone projection cone generated by
	linearly independent vectors $b_1,\ldots,b_n$, and let $\bar x\in\mathbb{R}^n$.
	Then there exists $\delta>0$ such that, for all $x\in B_\delta(\bar x)$,
	\[
	I_{\bar x}^+ \subset I_x^+,
	\qquad
	I_{\bar x}^- \subset I_x^-,
	\qquad
	I_x^\bullet \subset I_{\bar x}^\bullet .
	\]
\end{Lemma}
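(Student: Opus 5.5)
Apply Theorem~\ref{Thm11} with the partition $I_1:=I_{\bar x}^+$, $I_2:=I_{\bar x}^\bullet$, $I_3:=I_{\bar x}^-$ of $N$. By Lemma~\ref{lm28}, the family $\{b_i : i\in I_1\cup I_2\}\cup\{u_j: j\in I_3\}$ is a basis of $\mathbb{R}^n$. Hence the coordinates of $x$ with respect to this basis, $x=\sum_{i\in I_1\cup I_2}\alpha_i^0(x)b_i+\sum_{j\in I_3}\beta_j^0(x)u_j$, depend linearly, and therefore continuously, on $x$. At $x=\bar x$ this is exactly the decomposition \eqref{32} with $I=I_{\bar x}^-$, so $\alpha_i^0(\bar x)>0$ for $i\in I_1$, $\alpha_i^0(\bar x)=0$ for $i\in I_2$, and $\beta_j^0(\bar x)>0$ for $j\in I_3$.

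The more delicate part is that for general $x$ near $\bar x$ the coefficients on $I_2$ may be negative. Here we will not merely invoke the conclusion of Theorem~\ref{Thm11}; we will track its construction quantitatively. The construction only involves finitely many index sets. For each possible intermediate basis $\{b_i: i\in I_1\cup A\}\cup\{u_j: j\in I_3\cup B\}$, with $A\cup B=I_2$ disjoint, the coordinates of $x$ are linear in $x$. Each such coordinate map equals the corresponding coordinate map at $\bar x$ plus a term that is $O(\|x-\bar x\|)$.

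We claim that, at $\bar x$ itself, every such basis gives coefficients $\alpha_i(\bar x)=\alpha_i^0(\bar x)>0$ on $I_1$, zero coefficients on $A\cup B$, and $\beta_j^0(\bar x)$ on $I_3$. The reason is that $\bar x$ is already a combination of $b_i$, $i\in I_1$, and $u_j$, $j\in I_3$, so by uniqueness of coordinates these coefficients are unchanged. Since there are only finitely many pairs $(A,B)$, we can choose $\delta>0$ so small that for all $x\in B_\delta(\bar x)$ and all such pairs, the coefficients on $I_1$ stay positive and those on $I_3$ stay positive.

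Running the algorithm of Remark~(iii) from $x$, every representation produced uses one of these bases. In particular, the final representation \eqref{ccc} has positive coefficients on $I_1$, on $A_x$ and on $B_x\cup I_3$. Alternatively, the monotonicity $\beta_j^m\ge\beta_j^0>0$ on $I_3$ already comes out of the proof of Theorem~\ref{Thm11}.

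This final representation is of the form \eqref{32} with index set $I:=B_x\cup I_3$, with all $\beta_j>0$ on $I$ and $\alpha_i\ge0$ off $I$. By the uniqueness statement in Lemma~\ref{lem1}, it \emph{is} the canonical decomposition of $x$. Hence $I_x^-=B_x\cup I_{\bar x}^-\supset I_{\bar x}^-$. Moreover, $I_x^+\supset I_1\cup A_x\supset I_{\bar x}^+$, and $I_x^\bullet\subset I_2\setminus(A_x\cup B_x)\subset I_{\bar x}^\bullet$.

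The main obstacle is making sure that positivity on $I_1$ and $I_3$ survives each pivot step. I will handle this via the uniform-over-finitely-many-bases continuity argument above, which avoids any sign bookkeeping for the update formulas.
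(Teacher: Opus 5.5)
Your argument is correct and follows the same route as the paper. Both apply Theorem~\ref{Thm11} with $I_1=I^+_{\bar x}$, $I_2=I^\bullet_{\bar x}$, $I_3=I^-_{\bar x}$, keep the coefficients on $I^+_{\bar x}$ and $I^-_{\bar x}$ positive for $x$ near $\bar x$, and read off $I^+_x$, $I^-_x$, $I^\bullet_x$ from the resulting decomposition. Your uniform choice of $\delta$ over the finitely many bases $\{b_i: i\in I_1\cup A\}\cup\{u_j: j\in I_3\cup B\}$, together with the explicit appeal to uniqueness in Lemma~\ref{lem1}, makes rigorous the step the paper compresses into ``by continuity of the coefficients with respect to $x$.'' That step is not immediate, since the basis used in \eqref{ccc} depends on $x$.
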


\begin{proof}
	By Theorem~\ref{Thm11}, applied with
	\(
	I_1=I^+_{\bar x}, 
	I_2=I^\bullet_{\bar x}, 
	I_3=I^-_{\bar x},
	\) for every \(x\in\mathbb{R}^n\), 
	there exist two disjoint subsets \(A_x\) and \(B_x\) of \(I^\bullet_{\bar x}\) such that \(x\) admits the representation
	\[
	x
	=
	\sum_{i\in I_{\bar x}^+} \alpha_i b_i
	+
	\sum_{i\in A_x} \alpha_i b_i
	+
	\sum_{j\in B_x} \beta_j u_j
	+
	\sum_{j\in I_{\bar x}^-} \beta_j u_j,
	\]
	where
	\(
	\alpha_i>0 \ \text{for all } i\in A_x,
	\
	\beta_j>0 \ \text{for all } j\in B_x .
	\)
	
	On the other hand, the reference point $\bar x$ admits the representation
	\[
	\bar x
	=
	\sum_{i\in I_{\bar x}^+} \bar\alpha_i b_i
	+
	\sum_{j\in I_{\bar x}^-} \bar\beta_j u_j,
	\]
	with
	\[
	\bar\alpha_i>0 \ \text{for all } i\in I_{\bar x}^+,
	\qquad
	\bar\beta_j>0 \ \text{for all } j\in I_{\bar x}^- .
	\]
	
	By continuity of the coefficients with respect to $x$, there exists
	$\delta>0$ such that for all $x\in B_\delta(\bar x)$ one has
	\[
	\alpha_i>0 \quad \text{for all } i\in I_{\bar x}^+,
	\qquad
	\beta_j>0 \quad \text{for all } j\in I_{\bar x}^- .
	\]
	Consequently,
	\[
	I_x^+
	=
	I_{\bar x}^+ \cup \{\, i\in A_x : \alpha_i>0 \,\},
	\qquad
	I_x^-
	=
	I_{\bar x}^- \cup B_x,
	\]
	which immediately implies
	\[
	I_{\bar x}^+ \subset I_x^+,
	\qquad
	I_{\bar x}^- \subset I_x^-,
	\qquad
	I_x^\bullet \subset I_{\bar x}^\bullet .
	\]
	This completes the proof.
\end{proof}

The following result describes the directional and Fr\'echet differentiability
properties of the metric projection onto an isotone projection cone.

\begin{Theorem}\label{df}
	Let $K\subset\mathbb{R}^n$ be an isotone projection cone generated by linearly
	independent vectors $b_1,\ldots,b_n$, and let $\bar x\in\mathbb{R}^n$.
	Then the metric projection $P_K:\mathbb{R}^n\to K$ enjoys the following
	differentiability properties.
	
	\medskip
	\noindent{\bf (i) The nondegenerate case $I^\bullet_{\bar x}=\emptyset$.}
	In this case, the mapping $P_K$ is strictly Fr\'echet differentiable at $\bar x$, and
	\[
	\nabla P_K(\bar x)(y)
	=
	B(B^{T}B)^{-1}B^{T}y,
	\qquad \forall\, y\in\mathbb{R}^n,
	\]
	where $B$ denotes the matrix whose columns are the vectors
	$\{\,b_i \mid i\in I_{\bar x}^+\,\}$.
	
	\medskip
\noindent{\bf (ii) The degenerate case \(I^\bullet_{\bar x}\neq\emptyset\).}
In this case, the projection mapping \(P_K\) is not Fr\'echet differentiable at
\(\bar x\), while it is directionally differentiable at \(\bar x\) in every direction
\(h\in\mathbb{R}^n\).

For any \(h\in\mathbb{R}^n\), Theorem~\ref{Thm11} yields two disjoint subsets
\(A_h^\bullet\) and \(B_h^\bullet\) of \(I^\bullet_{\bar x}\) 
such that
\[
h
=
\sum_{i\in I_{\bar x}^+} \alpha_i b_i
+
\sum_{i\in A_h^\bullet} \alpha_i b_i
+
\sum_{j\in B_h^\bullet} \beta_j u_j
+
\sum_{j\in I^-_{\bar x}} \beta_j u_j,
\]
where
\(
\alpha_i>0, \ \forall i\in A_h^\bullet,
\
\beta_j>0, \ \forall j\in B_h^\bullet.
\)\\
Then the directional derivative of \(P_K\) at \(\bar x\) in the direction \(h\) is given by
\[
P_K'(\bar x;h)
=
B(B^{T}B)^{-1}B^{T}h,
\]
where \(B\) is the matrix whose columns are the vectors
\[
\{\,b_i \mid i\in I_{\bar x}^+\cup A_h^\bullet\,\}.
\]
\end{Theorem}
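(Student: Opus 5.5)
The plan is to reduce everything to the uniqueness statement of Lemma~\ref{lem1}, using the decomposition of Theorem~\ref{Thm11} with $I_1=I^+_{\bar x}$, $I_2=I^\bullet_{\bar x}$, $I_3=I^-_{\bar x}$. Write
\[
\bar x=\sum_{i\in I^+_{\bar x}}\bar\alpha_i b_i+\sum_{j\in I^-_{\bar x}}\bar\beta_j u_j,\qquad \bar\alpha_i>0,\ \bar\beta_j>0 .
\]
For a direction $h$, take the representation of $h$ furnished by Theorem~\ref{Thm11}. Adding $t$ times it to that of $\bar x$ gives
\[
\bar x+th=\sum_{i\in I^+_{\bar x}}(\bar\alpha_i+t\alpha_i)b_i+\sum_{i\in A_h^\bullet}t\alpha_i b_i+\sum_{j\in B_h^\bullet}t\beta_j u_j+\sum_{j\in I^-_{\bar x}}(\bar\beta_j+t\beta_j)u_j .
\]
For all sufficiently small $t>0$, every coefficient on $I^+_{\bar x}\cup A_h^\bullet$ is positive and every coefficient on $B_h^\bullet\cup I^-_{\bar x}$ is positive. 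The indices of $I^\bullet_{\bar x}\setminus(A_h^\bullet\cup B_h^\bullet)$ may be regarded as carrying $b_i$ with coefficient $0$.

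This is therefore a representation of type \eqref{32} with index set $I=B_h^\bullet\cup I^-_{\bar x}$, with $\beta_j>0$ on $I$ and $\alpha_i\ge 0$ off $I$. By the uniqueness part of Lemma~\ref{lem1},
\[
P_K(\bar x+th)=P_K(\bar x)+t\Bigl(\sum_{i\in I^+_{\bar x}}\alpha_ib_i+\sum_{i\in A^\bullet_h}\alpha_ib_i\Bigr)
\]
for all small $t>0$. This is exactly affine in $t$, so the directional derivative exists and equals $B\alpha$, where $\alpha=(\alpha_i)_{i\in I^+_{\bar x}\cup A_h^\bullet}$.

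To identify $B\alpha$ with $B(B^TB)^{-1}B^Th$, I use $\langle u_j,b_i\rangle=-\delta_{ij}$. Every $u_j$ in the representation of $h$ has $j\notin I^+_{\bar x}\cup A^\bullet_h$, so it is orthogonal to every column of $B$. The zero-coefficient $b_i$'s contribute nothing. Hence $B^Th=B^TB\alpha$, and since the columns of $B$ are linearly independent, $\alpha=(B^TB)^{-1}B^Th$. This gives the formula in (ii).

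For (i), when $I^\bullet_{\bar x}=\emptyset$, Lemma~\ref{lem2} gives $\delta>0$ such that $I^+_x=I^+_{\bar x}$ and $I^-_x=I^-_{\bar x}$ for all $x\in B_\delta(\bar x)$. Indeed, the index sets partition $N$, and $I^\bullet_x\subset I^\bullet_{\bar x}=\emptyset$. Thus for such $x$ we have $x=B\alpha(x)+\sum_{j\in I^-_{\bar x}}\beta_j(x)u_j$ and $P_K(x)=B\alpha(x)$. The same orthogonality argument gives $\alpha(x)=(B^TB)^{-1}B^Tx$. So $P_K$ coincides on a neighbourhood of $\bar x$ with the linear map $B(B^TB)^{-1}B^T$, which yields strict Fréchet differentiability with the stated derivative.

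For the non-differentiability claim in (ii), fix $i_0\in I^\bullet_{\bar x}$. From the formula above, for every $h$ the vector $P_K'(\bar x;h)$, expanded in the basis $\{b_i\}$, has coefficient on $b_{i_0}$ either $\alpha_{i_0}>0$ (if $i_0\in A^\bullet_h$) or $0$; in particular it is nonnegative. If $P_K$ were Fréchet differentiable at $\bar x$, then $P_K'(\bar x;\cdot)$ would be linear. Take $h=b_{i_0}$: its Theorem~\ref{Thm11} representation is $b_{i_0}$ itself with $A_h^\bullet=\{i_0\}$, so $P_K'(\bar x;b_{i_0})=b_{i_0}$. Linearity would then force $P_K'(\bar x;-b_{i_0})=-b_{i_0}$, whose $b_{i_0}$-coefficient is $-1<0$, a contradiction.

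The main obstacle is the bookkeeping in applying uniqueness in Lemma~\ref{lem1}. One must check that the combined expansion really has the required sign pattern for small $t$, including the indices of $I^\bullet_{\bar x}$ that lie in neither $A_h^\bullet$ nor $B_h^\bullet$. One must also check that the index set $I$ being used is admissible; this is harmless since Lemma~\ref{lem1} allows any $I\subset N$.
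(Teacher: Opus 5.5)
Your proof is correct, and for the two computational parts it follows the paper's route. In (i) you use Lemma~\ref{lem2} to freeze $I^+_x=I^+_{\bar x}$ and $I^-_x=I^-_{\bar x}$ near $\bar x$, so that $P_K$ is locally the linear map $B(B^TB)^{-1}B^T$. In (ii) you add $t$ times the Theorem~\ref{Thm11} expansion of $h$ to the expansion of $\bar x$ and read off $P_K(\bar x+th)$ from Lemma~\ref{lem1}. Your explicit check $B^Th=B^TB\alpha$, using $\langle u_j,b_i\rangle=0$ for $i\neq j$, supplies a justification the paper omits; it simply asserts that the result ``is exactly the orthogonal projection.''

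The genuine difference is in the non-differentiability claim. The paper argues only that the matrix $B_h(B_h^TB_h)^{-1}B_h^T$ changes with $h$. That alone does not show $h\mapsto P_K'(\bar x;h)$ is nonlinear, because distinct matrices can still agree on the particular directions they are applied to. Your sign argument closes this gap. You show $P_K'(\bar x;b_{i_0})=b_{i_0}$, while every directional derivative has a nonnegative $b_{i_0}$-coefficient. Hence $P_K'(\bar x;-b_{i_0})\neq -b_{i_0}$, and no linear derivative can exist at $\bar x$.
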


\begin{proof}
	(i) When \(I^\bullet_{\bar x}=\emptyset\), it follows from Lemma~\ref{lem2} that there exists a neighborhood \(B_\delta(\bar x)\) such that each \(x\in B_\delta(\bar x)\) admits a representation of the form
	\[
	x
	=
	\sum_{i\in I_{\bar x}^+} \alpha_i\, b_i
	\;+\;
	\sum_{j\in I^-_{\bar x}} \beta_j\, u_j,
	\qquad
	\alpha_i>0\ (i\in I^+_{\bar x}),\ 
	\beta_j>0\ (j\in I^-_{\bar x}).
	\]
	It follows from Lemma~\ref{lem1} that
	\[
	P_K(x)=\sum_{i\in I_{\bar x}^+}\alpha_i\, b_i.
	\]
	Hence, in the neighborhood $B_\delta(\bar x)$, the metric projection $P_K$ coincides with the 
	orthogonal projection onto the subspace generated by $\{b_i : i\in I^+_{\bar x}\}$.  
	That is,
	\[
	P_K(x)=Lx \quad \text{for all } x\in B_\delta(\bar x),
	\]
	where 
	\[
	L := B(B^T B)^{-1}B^T,
	\]
	and $B$ is the matrix whose columns are the vectors $\{b_i : i\in I^+_{\bar x}\}$.  
	Therefore, $P_K$ is strictly Fréchet differentiable at $\bar x$, and
	\[
	\nabla P_K(\bar x)(y)
	=
	B(B^T B)^{-1}B^T y,
	\qquad \forall\, y\in \mathbb{R}^n.
	\]
	
	\medskip
	(ii) Now assume $I^\bullet_{\bar x}\neq\emptyset$.
	Using Lemma~\ref{lem1} and Theorem~\ref{Thm11}, the point $\bar x$ admits the unique representation
	\[
	\bar x
	=
	\sum_{i\in I_{\bar x}^+} \bar\alpha_i b_i
	\;+\;
	\sum_{j\in I^-_{\bar x}} \bar\beta_j u_j,
	\qquad
	\bar\alpha_i>0,\ \bar\beta_j>0.
	\]
	
	For any direction $h\in\mathbb{R}^n$, consider its decomposition
	\[
	h
	=
	\sum_{i\in I_{\bar x}^+} \alpha_i b_i
	\;+\;
	\sum_{i\in A_h^\bullet} \alpha_i b_i
	\;+\;
	\sum_{j\in B_h^\bullet} \beta_j u_j
	\;+\;
	\sum_{j\in I^-_{\bar x}} \beta_j u_j,
	\]
	where the index sets satisfy
	\[
	A_h^\bullet, B_h^\bullet \subset I_{\bar x}^\bullet,\qquad
	A_h^\bullet \cap B_h^\bullet = \emptyset,
	\]
	and
	\[
	\alpha_i>0\ (i\in A_h^\bullet), 
	\qquad 
	\beta_j>0\ (j\in B_h^\bullet).
	\]
	
	Then, for all sufficiently small \(t>0\),
	\[
	\bar x + th
	=
	\sum_{i\in I_{\bar x}^+} (\bar\alpha_i + t\alpha_i)\, b_i
	\;+\;
	\sum_{i\in A_h^\bullet} t\alpha_i\, b_i
	\;+\;
	\sum_{j\in B_h^\bullet} t\beta_j\, u_j
	\;+\;
	\sum_{j\in I^-_{\bar x}} (\bar\beta_j+t\beta_j)\, u_j,
	\]
	where all coefficients appearing in the above representation are strictly positive.
	Therefore, Lemma~\ref{lem1} implies
	\[
	P_K(\bar x+th)
	=
	\sum_{i\in I_{\bar x}^+} (\bar\alpha_i+t\alpha_i)\, b_i
	\;+\;
	\sum_{i\in A_h^\bullet} t\alpha_i\, b_i.
	\]
		Since 
	\[
	P_K(\bar x)
	=
	\sum_{i\in I_{\bar x}^+} \bar\alpha_i\, b_i,
	\]
	we obtain
	\[
	\begin{aligned}
		P_K'(\bar x;h)
		&=
		\lim_{t\downarrow 0}\frac{P_K(\bar x+th)-P_K(\bar x)}{t} \\[0.2cm]
		&=
		\sum_{i\in I_{\bar x}^+} \alpha_i b_i
		\;+\;
		\sum_{i\in A_h^\bullet} \alpha_i b_i.
	\end{aligned}
	\]
	
	Finally, this expression is exactly the orthogonal projection of $h$ onto 
	$\operatorname{span}\{b_i : i\in I_{\bar x}^+ \cup A_h^\bullet\}$, i.e.,
	\[
	P_K'(\bar x;h)
	=
	B_h(B_h^T B_h)^{-1}B_h^T h,
	\]
	where $B_h$ is the matrix whose columns are the vectors $\{b_i : i\in I_{\bar x}^+ \cup A_h^\bullet\}$.

Since \(I^\bullet_{\bar x}\neq\emptyset\), there exist directions \(h_1,h_2\in\mathbb{R}^n\) such that
\[
A^\bullet_{h_1}\neq A^\bullet_{h_2},
\]
and consequently,
\[
B_{h_1}(B_{h_1}^{T}B_{h_1})^{-1}B_{h_1}^{T}
\neq
B_{h_2}(B_{h_2}^{T}B_{h_2})^{-1}B_{h_2}^{T}.
\]
Hence, the directional derivative mapping depends on the direction \(h\), which implies that \(P_K\) is not Fr\'echet differentiable at \(\bar x\).

\end{proof}

	The following result provides the formula for calculating the Fréchet coderivatives of the metric projection onto the isotone projection cone in $\R^n$.

\begin{Theorem}\label{Thm1}
	Let $K \subset \mathbb{R}^n$ be an isotone projection cone generated by linearly 
	independent vectors $b_1,\dots,b_n$, and let $\bar x \in \mathbb{R}^n$. 
	Then the Fréchet coderivative of the metric projection $P_K$ at $\bar x$ satisfies
	\begin{equation}\label{kq}
		\widehat D^\ast P_K(\bar x)(y)
		=
		\left\{
		\begin{aligned}
			z \in \mathbb{R}^n \;\Big|\;& 
			\langle z, b_i\rangle = \langle y, b_i\rangle,
			&& \forall\,  i \in I^+_{\bar x}, \\[2mm]
			& \langle z, b_i\rangle \le \langle y, b_i\rangle,
			&& \forall\, i \in I^\bullet_{\bar x}, \\[2mm]
			& \langle z, u_j\rangle \le 0,
			&& \forall\, j \in I^\bullet_{\bar x}, \\[2mm]
			& \langle z, u_j\rangle = 0,
			&& \forall\, j \in I^-_{\bar x}
		\end{aligned}
		\right\},
		\qquad \forall\, y \in \mathbb{R}^n.
	\end{equation}
\end{Theorem}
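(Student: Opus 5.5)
The plan is to combine Lemma~\ref{Lem1} with the local piecewise-linear description of $P_K$ near $\bar x$ given by Theorem~\ref{Thm11}, Lemma~\ref{lem2} and the computation in the proof of Theorem~\ref{df}(ii). Since $P_K$ is nonexpansive, Lemma~\ref{Lem1} applies. Take $u=\bar x+h$ with $h$ small and write $h$ in the form of Theorem~\ref{df}(ii):
\[
h=\sum_{i\in I^+_{\bar x}}\alpha_i b_i+\sum_{i\in A_h^\bullet}\alpha_i b_i+\sum_{j\in B_h^\bullet}\beta_j u_j+\sum_{j\in I^-_{\bar x}}\beta_j u_j .
\]
That computation is positively homogeneous in $h$ and holds for all $t\in(0,1]$ as soon as $\|h\|$ is small, because the coefficients of $\bar x$ are bounded away from zero. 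This gives the exact identity
\[
P_K(\bar x+h)-P_K(\bar x)=\sum_{i\in I^+_{\bar x}\cup A_h^\bullet}\alpha_i b_i .
\]
Therefore the quotient in Lemma~\ref{Lem1} becomes
\[
\frac{\sum_{i\in I^+_{\bar x}}\alpha_i\langle z-y,b_i\rangle+\sum_{i\in A_h^\bullet}\alpha_i\langle z-y,b_i\rangle+\sum_{j\in B_h^\bullet\cup I^-_{\bar x}}\beta_j\langle z,u_j\rangle}{\|h\|}.
\]
The coefficients $(\alpha,\beta)$ are coordinates of $h$ in one of finitely many bases, by Lemma~\ref{lm28}. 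Hence $\|h\|$ is comparable to $\|(\alpha,\beta)\|$ uniformly over the finitely many possible pairs $(A_h^\bullet,B_h^\bullet)$.

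For sufficiency, assume $z$ satisfies the four conditions in \eqref{kq}. The sums over $I^+_{\bar x}$ and $I^-_{\bar x}$ vanish. The sum over $A_h^\bullet$ is $\le 0$, since $\alpha_i>0$ and $\langle z-y,b_i\rangle\le0$. The sum over $B_h^\bullet$ is $\le0$, since $\beta_j>0$ and $\langle z,u_j\rangle\le0$. So the quotient is $\le 0$ for all small $h$, and $z\in\widehat D^\ast P_K(\bar x)(y)$.

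For necessity, test specific directions $h=t d$ with $t\downarrow0$, choosing $d$ so that its representation from Theorem~\ref{Thm11} is known exactly.
\begin{itemize}
\item[(a)] For $i\in I^+_{\bar x}$, take $d=\pm b_i$. Since $I^+_{\bar x}$ is unrestricted in sign, $A^\bullet_d=B^\bullet_d=\emptyset$ and the quotient is $\pm\langle z-y,b_i\rangle/\|b_i\|\le0$. This gives equality.
\item[(b)] For $j\in I^-_{\bar x}$, take $d=\pm u_j$ in the same way, giving $\langle z,u_j\rangle=0$.
\item[(c)] For $i\in I^\bullet_{\bar x}$, take $d=b_i$. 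This is already a valid representation with $A_d^\bullet=\{i\}$, $\alpha_i=1$ and the other coefficients zero, so we get $\langle z-y,b_i\rangle\le0$.
\item[(d)] For $j\in I^\bullet_{\bar x}$, take $d=u_j$, with $B_d^\bullet=\{j\}$, $\beta_j=1$, giving $\langle z,u_j\rangle\le0$.
\end{itemize}

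The main obstacle is legitimizing the representation step. Theorem~\ref{Thm11} asserts uniqueness of the representation only through the construction, and the formula for $P_K(\bar x+h)$ must hold exactly for small $h$, not merely along rays. The cleanest route is to avoid relying on the algorithm. For any $h$ and any representation of the displayed form with strictly positive coefficients on $A_h^\bullet$ and $B_h^\bullet$, the point $\bar x+h$ has a decomposition of the type in Lemma~\ref{lem1} once $\|h\|$ is small, and that lemma's uniqueness then pins down $P_K(\bar x+h)$. This observation justifies both the identity used in the sufficiency step and the direct choices $d=b_i$, $d=u_j$ in (c) and (d). The uniform comparability of $\|h\|$ with the coefficient norm follows from equivalence of norms in each of the finitely many bases.
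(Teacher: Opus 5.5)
Your proposal is correct and follows essentially the paper's proof. Both combine Lemma~\ref{Lem1} with the local representation from Theorem~\ref{Thm11}, use the same sign-based sufficiency argument, and test the same directions for necessity: $\pm b_i$ for $i\in I^+_{\bar x}$, $b_i,u_j$ for $i,j\in I^\bullet_{\bar x}$, and $\pm u_j$ for $j\in I^-_{\bar x}$. Your differences are refinements rather than a new route: you work directly with the coordinate form of the quotient instead of the intermediate criterion $\langle z-A_d(y),d\rangle\le 0$ and its subsequence argument, and you explicitly justify the uniform neighborhood on which the local formula for $P_K$ holds (norm equivalence over finitely many bases, plus uniqueness in Lemma~\ref{lem1}), which the paper only asserts.
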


\begin{proof}
By Lemma~\ref{Lem1},
	\begin{equation}\label{eq:limsup}
		z \in \widehat D^\ast P_K(\bar x)(y)
		\iff 
		\limsup_{x\to\bar x}
		\frac{\langle z, x - \bar x\rangle - \langle y, P_K(x) - P_K(\bar x)\rangle}
		{\|x - \bar x\|}
		\le 0.
	\end{equation}
	
	Moreover, Lemma~\ref{lem2} ensures that, for $x$ sufficiently close to $\bar x$ we have 
	$I_{\bar x}^+ \subset I_x^+$ and $I_{\bar x}^- \subset I_x^-$.  
	Set $d := (x-\bar x)/\|x-\bar x\|$.  
	Assume that $d$ admits the representation
	\begin{equation}\label{d}
		d
		=
		\sum_{i\in I_{\bar x}^+} \alpha_i b_i
		+
		\sum_{i\in A_d^\bullet} \alpha_i b_i
		+
		\sum_{j\in B_d^\bullet} \beta_j u_j
		+
		\sum_{j\in I_{\bar x}^-} \beta_j u_j,
	\end{equation}
	where $A_d^\bullet$ and $B_d^\bullet$ are disjoint subsets of $I^\bullet_{\bar x}$,  
	with $\alpha_i>0$ for $i\in A_d^\bullet$ and $\beta_j>0$ for $j\in B_d^\bullet$.
	
	Since $P_K(x)$ coincides, for all $x$ sufficiently close to $\bar x$,  
	with the projection $A_d$  onto the subspace generated by  
	$\{b_i : i \in I_{\bar x}^+ \cup A_d^\bullet\}$, we obtain
	\[
	P_K(x) - P_K(\bar x)
	= A_d(x) - A_d(\bar x)
	= A_d(x-\bar x).
	\]

Thus,
\[
\begin{aligned}
	\frac{\langle z, x - \bar x\rangle - \langle y, P_K(x) - P_K(\bar x)\rangle}
	{\|x - \bar x\|}
	&= \langle z, d\rangle - \langle y, A_d(d)\rangle \\
	&= \langle z, d\rangle - \langle A_d^{T}(y), d\rangle \\
	&= \langle z - A_d(y), d\rangle.
\end{aligned}
\]

We next derive a characterization of the Fréchet coderivative of \(P_K\) at \(\bar x\). In particular, we establish that
\[
z \in \widehat D^\ast P_K(\bar x)(y)
\quad\Longleftrightarrow\quad
\sup_{\|d\|= 1} \langle z - A_d(y), d\rangle \le 0,
\]
which is equivalent to the condition
\begin{equation}\label{q}
	\langle z - A_d(y), d\rangle \le 0,\qquad \forall\, d\in\mathbb{R}^n.
\end{equation}

Let us first assume that \(z \in \widehat D^\ast P_K(\bar x)(y)\). Then
\begin{equation}\label{rg}
\limsup_{x\to \bar x}
\frac{\langle z, x - \bar x\rangle - \langle y, P_K(x) - P_K(\bar x)\rangle}
{\|x - \bar x\|}
\le 0.
\end{equation}
Fix \(d\in\mathbb{R}^n\) with \(\|d\|=1\). Let \(t_n \downarrow 0\) and define
\[
x_n := \bar x + t_n d.
\]
Then \(x_n \to \bar x\) and \(d_n = d\). Hence,
\[
\frac{\langle z, x_n - \bar x\rangle - \langle y, P_K(x_n) - P_K(\bar x)\rangle}
{\|x_n - \bar x\|}
=
\langle z - A_d(y), d\rangle.
\]
By \eqref{rg}, we obtain
\[
\langle z - A_d(y), d\rangle \le 0.
\]
Since \(d\in\mathbb{R}^n\) is arbitrary with \(\|d\|=1\), it follows that
\[
\sup_{\|d\|=1} \langle z - A_d(y), d\rangle \le 0.
\]

Conversely, assume that
\[
\sup_{\|d\|=1} \langle z - A_d(y), d\rangle \le 0.
\]
We show that
\[
\limsup_{x\to \bar x}
\frac{\langle z, x - \bar x\rangle - \langle y, P_K(x) - P_K(\bar x)\rangle}
{\|x - \bar x\|}
\le 0.
\]
Let \(x_n \to \bar x\), \(x_n \neq \bar x\), and define
\[
t_n := \|x_n - \bar x\|, 
\qquad 
d_n := \frac{x_n - \bar x}{t_n}, \quad \|d_n\|=1.
\]
Set
\[
\alpha := \limsup_{n\to\infty}
\frac{\langle z, x_n - \bar x\rangle - \langle y, P_K(x_n) - P_K(\bar x)\rangle}
{\|x_n - \bar x\|}.
\]
Then there exists a subsequence (still denoted by \(n\)) such that
\[
\frac{\langle z, x_n - \bar x\rangle - \langle y, P_K(x_n) - P_K(\bar x)\rangle}
{\|x_n - \bar x\|}
\to \alpha.
\]

Since the family \(\{A_d \mid d\in\mathbb{R}^n\}\) is finite, passing to a subsequence if necessary, we may assume that there exists \(\bar d\in\mathbb{R}^n\) such that
\[
A_{d_n}=A_{\bar d}
\qquad \text{for all } n\in\mathbb{N}^\ast.
\]
Consequently,
\[
\frac{\langle z, x_n - \bar x\rangle - \langle y, P_K(x_n) - P_K(\bar x)\rangle}
{\|x_n - \bar x\|}
=
\langle z - A_{\bar d}(y), d_n\rangle.
\]
Moreover,
\[
\lim_{n\to\infty}
\langle z - A_{\bar d}(y), d_n\rangle
=
\alpha.
\]
Therefore, for every \(\varepsilon>0\), there exists \(\bar n\in\mathbb{N}^\ast\) such that
\[
\alpha-\varepsilon
<
\langle z - A_{\bar d}(y), d_{\bar n}\rangle
=
\langle z - A_{d_{\bar n}}(y), d_{\bar n}\rangle
\le 0.
\]
Since \(\varepsilon>0\) is arbitrary, it follows that \(\alpha\le 0\). Consequently,
\[
\limsup_{x\to \bar x}
\frac{\langle z, x - \bar x\rangle - \langle y, P_K(x) - P_K(\bar x)\rangle}
{\|x - \bar x\|}
\le 0,
\]
which implies
\[
z \in \widehat D^\ast P_K(\bar x)(y).
\]

We now prove the main assertion.

	\medskip
	\noindent\textbf{(``$\subset$'').}
	Assume that $z \in \widehat D^\ast P_K(\bar x)(y)$.  
	Then \eqref{q} holds for every $d\in\mathbb{R}^n$.  
	
For each \(i\in I_{\bar x}^+\), choosing \(d=\pm b_i\) in \eqref{q}, together with the identities
\(
A^\bullet_{b_i}=A^\bullet_{-b_i}=\emptyset
\)
and
\(
A_{b_i}=A_{-b_i},
\)
yields
	\[
	\langle z - A_{b_i}(y), b_i\rangle = 0.
	\]
	Since $\langle A_{b_i}(y), b_i\rangle = \langle y, A_{b_i}(b_i)\rangle= \langle y, b_i\rangle$, we obtain
	\begin{equation}\label{1-eng}
		\langle z, b_i\rangle = \langle y, b_i\rangle.
	\end{equation}
	
	Next, for any $i,j\in I_{\bar x}^\bullet$, taking $d^1=b_i$ and $d^2=u_j$ in \eqref{q} yields
	\[
	\langle z - A_{d^1}(y), b_i\rangle \le 0,
	\qquad
	\langle z - A_{d^2}(y), u_j\rangle \le 0.
	\]
	Since 
	$\langle A_{d^1}(y), b_i\rangle =\langle y, A_{d^1}(b_i)\rangle= \langle y, b_i\rangle$  
	and  
	$\langle A_{d^2}(y), u_j\rangle =\langle y, A_{d^2}(u_j)\rangle= 0$,  
	we deduce
	\begin{equation}\label{2-eng}
		\langle z, b_i\rangle \le \langle y, b_i\rangle,
		\qquad
		\langle z, u_j\rangle \le 0.
	\end{equation}
	
Similarly, for every \(j\in I_{\bar x}^-\), choosing \(d=\pm u_j\) in \eqref{q}, together with the identity
\(
A_{u_j}=A_{-u_j},
\)
yields
\[
\langle z - A_{u_j}(y), u_j\rangle = 0.
\]
Since
\(
\langle A_{u_j}(y),u_j\rangle
=
\langle y, A_{u_j}(u_j)\rangle
=0,
\)
it follows that
\begin{equation}\label{3-eng}
	\langle z, u_j\rangle = 0.
\end{equation}
	
	Collecting \eqref{1-eng}--\eqref{3-eng} yields
	\[
	\widehat D^\ast P_K(\bar x)(y) \subset 
	\left\{
	\begin{aligned}
		z \in \mathbb{R}^n \;\Big|\;& 
		\langle z, b_i\rangle = \langle y, b_i\rangle \quad\forall i\in I^+_{\bar x}, \\[2mm]
		& \langle z, b_i\rangle \le \langle y, b_i\rangle \quad\forall i\in I^\bullet_{\bar x}, \\[2mm]
		& \langle z, u_j\rangle \le 0 \quad\forall j\in I^\bullet_{\bar x}, \\[2mm]
		& \langle z, u_j\rangle = 0 \quad\forall j\in I^-_{\bar x}.
	\end{aligned}
	\right\}.
	\]
	
	\medskip
	\noindent\textbf{(``$\supset$'').}
	Conversely, suppose that $z$ satisfies all conditions on the right-hand side of \eqref{kq}.  
	Let $d$ be arbitrary with representation \eqref{d}.  
	Using the imposed inequalities for \(\langle z,b_i\rangle\) and \(\langle z,u_j\rangle\), we have
	\[
	\begin{aligned}
		\langle z - A_d(y), d\rangle
		&=
		\sum_{i\in I_{\bar x}^+} 
		\alpha_i\,\langle z - A_d(y), b_i\rangle
		+
		\sum_{i\in A_d^\bullet} 
		\alpha_i\,\langle z - A_d(y), b_i\rangle \\[1mm]
		&\quad+
		\sum_{j\in B_d^\bullet} 
		\beta_j\,\langle z - A_d(y), u_j\rangle
		+
		\sum_{j\in I_{\bar x}^-} 
		\beta_j\,\langle z - A_d(y), u_j\rangle \\[2mm]
		&=
		\sum_{i\in I_{\bar x}^+} 
		\alpha_i\big(\langle z,b_i\rangle - \langle A_d(y),b_i\rangle\big)
		+
		\sum_{i\in A_d^\bullet} 
		\alpha_i\,\langle z - y, b_i\rangle \\
		&\quad+
		\sum_{j\in B_d^\bullet} 
		\beta_j\,\langle z, u_j\rangle
		+
		\sum_{j\in I_{\bar x}^-} 
		\beta_j\big(\langle z,u_j\rangle - \langle A_d(y),u_j\rangle\big).
	\end{aligned}
	\]
	Now, observe that the first and the fourth sums vanish. Indeed, for every \(i\in I_{\bar x}^+\),
	\[
	\langle z - A_d(y), b_i\rangle
	=
	\langle z,b_i\rangle - \langle y, A_d(b_i)\rangle
	=
	\langle z,b_i\rangle - \langle y,b_i\rangle
	=0,
	\]
	since \(A_d(b_i)=b_i\). Similarly, for every \(j\in I_{\bar x}^-\),
	\[
	\langle z - A_d(y), u_j\rangle
	=
	\langle z,u_j\rangle - \langle y, A_d(u_j)\rangle
	=
	\langle z,u_j\rangle 
	=0.
	\]
	Hence,
	\[
	\begin{aligned}
		\langle z - A_d(y), d\rangle
		&=
		\sum_{i\in A_d^\bullet} 
		\alpha_i\,\langle z - y, b_i\rangle
		+
		\sum_{j\in B_d^\bullet} 
		\beta_j\,\langle z, u_j\rangle
		\le 0.
	\end{aligned}
	\]
	Thus $\langle z - A_d(y), d\rangle \le 0$ for all $d$, and by \eqref{q} we conclude that  
	\(z\in \widehat D^\ast P_K(\bar x)(y)\).
	
	The proof is complete.
\end{proof}

In the special case where $K$ is a generalized orthant, that is, a cone generated by an 
orthogonal basis $\{b_1,\dots,b_n\}$, we obtain the following result.

\begin{Corollary}\label{Co1}
	Let $K \subset \mathbb{R}^n$ be a generalized orthant generated by an orthogonal basis 
	$\{b_1,\dots,b_n\}$, and let $\bar x \in \mathbb{R}^n$. 
	Then the Fréchet coderivative of the metric projection $P_K$ at $\bar x$ is given by  
	\begin{equation}\label{kq2}
		\widehat D^\ast P_K(\bar x)(y)
		=
		\left\{
		z \in \mathbb{R}^n \;\Big|\;
		\begin{aligned}
			&\langle z, b_i\rangle = \langle y, b_i\rangle,
			&&  \forall\, i \in I^+_{\bar x}, \\[1mm]
			&0 \le \langle z, b_i\rangle \le \langle y, b_i\rangle,
			&& \forall\, i \in I^\bullet_{\bar x},\\[1mm]
			&\langle z, b_i\rangle = 0,
			&& \forall\, i \in I^-_{\bar x}
		\end{aligned}
		\right\},
		\qquad \forall\, y \in \mathbb{R}^n .
	\end{equation}
\end{Corollary}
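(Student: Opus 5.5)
The plan is to obtain the corollary directly from Theorem~\ref{Thm1}, by computing the vectors $u_j$ explicitly when the generators are orthogonal. First I will check that a generalized orthant is an isotone projection cone, so that Theorem~\ref{Thm1} applies.

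The $u_j$ are determined by $\langle u_j,b_i\rangle=-\delta_{ij}$. With $\{b_i\}$ orthogonal, the natural candidate is $u_j=-b_j/\|b_j\|^2$. It satisfies these relations, and by uniqueness (the $b_i$ form a basis) it is the only solution. Then for $j\neq k$ we get $\langle u_j,u_k\rangle=\langle b_j,b_k\rangle/(\|b_j\|^2\|b_k\|^2)=0\le 0$. By the characterization recalled from \cite[Theorem~8]{IN92}, $K$ is therefore an isotone projection cone.

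Next I substitute $u_j=-b_j/\|b_j\|^2$ into each line of \eqref{kq}:
\begin{itemize}
\item For $j\in I^-_{\bar x}$, the condition $\langle z,u_j\rangle=0$ becomes $\langle z,b_j\rangle=0$.
\item For $j\in I^\bullet_{\bar x}$, the condition $\langle z,u_j\rangle\le 0$ becomes $-\langle z,b_j\rangle/\|b_j\|^2\le 0$, that is, $\langle z,b_j\rangle\ge 0$.
\item For $i\in I^\bullet_{\bar x}$, this combines with the existing condition $\langle z,b_i\rangle\le\langle y,b_i\rangle$ to give $0\le\langle z,b_i\rangle\le\langle y,b_i\rangle$.
\item The equality condition on $I^+_{\bar x}$ is unchanged.
\end{itemize}
Together these yield exactly \eqref{kq2}.

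There is no real obstacle. The only point needing care is that the index sets $I^+_{\bar x},I^\bullet_{\bar x},I^-_{\bar x}$ are the same ones as in Theorem~\ref{Thm1}, defined via Lemma~\ref{lem1}. In the orthogonal case they reduce to the sign pattern of the coordinates $\langle \bar x,b_i\rangle$, but I do not need this reduction, because the statement is phrased in terms of the same sets.
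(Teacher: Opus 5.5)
Your proposal is correct and follows the paper's proof: compute $u_j=-b_j/\|b_j\|^2$ from $\langle u_j,b_i\rangle=-\delta_{ij}$ and substitute into \eqref{kq} of Theorem~\ref{Thm1}. Your extra check that $\langle u_j,u_k\rangle=0$ for $j\neq k$ is a step the paper leaves implicit; it shows the generalized orthant is an isotone projection cone, so Theorem~\ref{Thm1} applies.
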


\begin{proof}
	Since $\{b_i : i \in N\}$ is an orthogonal basis, it follows that 
	\[
	u_j = -\frac{b_j}{\|b_j\|^2}, \qquad \forall j \in N.
	\]
	Substituting this relation into \eqref{kq} immediately yields \eqref{kq2}.
\end{proof}

\begin{Remark}
{\rm	When $K$ is the positive cone in $\mathbb{R}^n$, formula \eqref{kq2} reduces exactly to the results established in \cite[Theorem~3.1]{HQ} and \cite[Theorem~4.1]{Li24.2}.
}\end{Remark}

Next,  we derive the limiting (Mordukhovich) coderivative $D^\ast P_{K}(\bar x)$.  
Since $P_K$ is continuous, its graph is closed; hence, by \cite[Equation~8(18)]{RW98}, one has
\begin{equation}\label{dn}
	D^\ast P_{K}(\bar x)(y)
	=
	\Limsup_{x \to \bar x,\;\; y' \to y}
	\widehat D^\ast P_{K}(x)(y').
\end{equation}
	
	This, together with  Theorem \ref{Thm1}, allows us to provide a complete characterization of $D^\ast P_{K}(\ox)$.

\begin{Theorem}\label{Thm34}
	Let $K\subset\mathbb{R}^n$ be an isotone projection cone generated by
	linearly independent vectors $b_1,\dots,b_n$, and suppose that
	\[
	\bar x
	= \sum_{i\in I^+_{\bar x}} \bar\alpha_i b_i
	+ \sum_{j\in I^-_{\bar x}} \bar\beta_j u_j .
	\]
	Then the Mordukhovich coderivative of $P_K$ at $\bar x$ is given by
\begin{equation}\label{kq22}
	D^\ast P_K(\bar x)(y)
	=
	\bigcup_{
			I, J\subset I^\bullet_{\bar x},\ I\cap J=\emptyset
	}
	\left\{
	\begin{aligned}
		z\in\mathbb{R}^n\ \Big|\ &
		\langle z,b_i\rangle=\langle y,b_i\rangle,
		&& \forall\, i\in I^+_{\bar x}\cup I, \\[2mm]
		&\langle z,b_i\rangle\le \langle y,b_i\rangle,
		&& \forall\, i\in I^\bullet_{\bar x}\setminus(I\cup J), \\[2mm]
		&\langle z,u_j\rangle=0,
		&& \forall\, j\in I^-_{\bar x}\cup J, \\[2mm]
		&\langle z,u_j\rangle\le 0,
		&& \forall\, j\in I^\bullet_{\bar x}\setminus(I\cup J)
	\end{aligned}
	\right\},
	\ \forall\, y\in\mathbb{R}^n .
\end{equation}
\end{Theorem}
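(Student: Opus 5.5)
The plan is to start from formula \eqref{dn}: $D^\ast P_K(\bar x)(y)$ is the outer limit of $\widehat D^\ast P_K(x)(y')$ as $x\to\bar x$ and $y'\to y$. We then evaluate each $\widehat D^\ast P_K(x)(y')$ with Theorem~\ref{Thm1} at the nearby point $x$, and control the index sets $I_x^+,I_x^\bullet,I_x^-$ with Lemma~\ref{lem2}.

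\textbf{Step 1 (index sets near $\bar x$).} By Lemma~\ref{lem2} there is $\delta>0$ such that every $x\in B_\delta(\bar x)$ satisfies $I^+_{\bar x}\subset I^+_x$, $I^-_{\bar x}\subset I^-_x$ and $I^\bullet_x\subset I^\bullet_{\bar x}$. Set $I:=I^+_x\cap I^\bullet_{\bar x}$ and $J:=I^-_x\cap I^\bullet_{\bar x}$. These are disjoint subsets of $I^\bullet_{\bar x}$, and
\[
I^\bullet_x=I^\bullet_{\bar x}\setminus(I\cup J),\qquad I^+_x=I^+_{\bar x}\cup I,\qquad I^-_x=I^-_{\bar x}\cup J .
\]
By Theorem~\ref{Thm1}, $\widehat D^\ast P_K(x)(y')$ is then exactly the set $S_{I,J}(y')$, namely the set inside the union in \eqref{kq22} with $y$ replaced by $y'$.

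\textbf{Step 2 (inclusion ``$\subset$'').} Take $z\in D^\ast P_K(\bar x)(y)$. Then there are sequences $x_k\to\bar x$, $y_k\to y$, $z_k\to z$ with $z_k\in \widehat D^\ast P_K(x_k)(y_k)=S_{I_k,J_k}(y_k)$. Only finitely many pairs $(I,J)$ occur, so after passing to a subsequence $(I_k,J_k)\equiv(I,J)$ is constant. Each $S_{I,J}(y')$ is defined by finitely many linear equalities and non-strict inequalities that depend continuously on $(z,y')$, so they survive in the limit, and $z\in S_{I,J}(y)$.

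\textbf{Step 3 (inclusion ``$\supset$'', the main obstacle).} Fix disjoint $I,J\subset I^\bullet_{\bar x}$. It suffices to produce points $x_k\to\bar x$ with $I^+_{x_k}=I^+_{\bar x}\cup I$, $I^-_{x_k}=I^-_{\bar x}\cup J$ and $I^\bullet_{x_k}=I^\bullet_{\bar x}\setminus(I\cup J)$. Then Step~1 gives $\widehat D^\ast P_K(x_k)(y)=S_{I,J}(y)$, and taking $y'=y$, $z_k=z$ puts all of $S_{I,J}(y)$ into the outer limit. The natural candidate is
\[
x_k:=\bar x+\frac1k\Big(\sum_{i\in I}b_i+\sum_{j\in J}u_j\Big)
=\sum_{i\in I^+_{\bar x}}\bar\alpha_i b_i+\sum_{i\in I}\tfrac1k b_i+\sum_{j\in J}\tfrac1k u_j+\sum_{j\in I^-_{\bar x}}\bar\beta_j u_j ,
\]
which is written in the basis $\{b_i: i\in I^+_{\bar x}\cup I\cup(I^\bullet_{\bar x}\setminus(I\cup J))\}\cup\{u_j: j\in I^-_{\bar x}\cup J\}$. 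The coefficients of the $b_i$ are $\ge0$ and those of the $u_j$ are $>0$, and the $b_i$ with $i\in I^\bullet_{\bar x}\setminus(I\cup J)$ appear with coefficient zero. This is an admissible decomposition in Lemma~\ref{lem1}, and by the uniqueness stated there it determines the index sets. So $I^+_{x_k}=I^+_{\bar x}\cup I$, $I^-_{x_k}=I^-_{\bar x}\cup J$, and the remaining indices form $I^\bullet_{x_k}$. The point to check carefully is that the degenerate indices keep coefficient zero but are still counted in the $b$-part with index set $N\setminus I^-_{x_k}$, so they land in $I^\bullet_{x_k}$ as required.

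Combining Steps~2 and~3 gives \eqref{kq22}. The case $I=J=\emptyset$ recovers $\widehat D^\ast P_K(\bar x)(y)$ as one term of the union, consistent with Theorem~\ref{Thm1}.
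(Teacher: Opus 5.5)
Your proposal is correct and follows the paper's proof essentially step for step. For ``$\supset$'' you use the same sequence $x^k=\bar x+\frac1k\bigl(\sum_{i\in I}b_i+\sum_{j\in J}u_j\bigr)$ with $y^k=y$, $z^k=z$; for ``$\subset$'' you use the same finiteness argument (a subsequence with constant index sets), combined with Lemma~\ref{lem2} and Theorem~\ref{Thm1}. Your explicit appeal to the uniqueness in Lemma~\ref{lem1} to identify $I^+_{x^k}$, $I^\bullet_{x^k}$, $I^-_{x^k}$ justifies the step the paper only asserts as ``clearly''.
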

\begin{proof}
	\textbf{(``\(\supset\)'' .)}
Assume that \(z \in \mathbb{R}^n\) satisfies all conditions on the right-hand side of \eqref{kq22}. Then there exist disjoint subsets \(I, J \subset I^\bullet_{\bar x}\) such that
	\begin{equation} \label{3.15}
	\begin{cases}
		\langle z, b_i\rangle = \langle y, b_i\rangle,
		& 
		\forall\, i \in I^+_{\bar x}\cup I, \\[0.3em]
		\langle z, b_i\rangle \le \langle y, b_i\rangle,
		&  \forall\, i \in I^\bullet_{\bar x}\setminus(I\cup J), \\[0.3em]
		\langle z, u_j\rangle \le 0,
		& \forall\, j \in I^\bullet_{\bar x}\setminus(I\cup J, \\[0.3em]
		\langle z, u_j\rangle = 0,
		& \forall\, j\in I^-_{\bar x}\cup J.
	\end{cases}
	\end{equation}
For each $k \in \mathbb{N^*}$, set $y^k := y$, $z^k := z$, and construct
	$x^k \in \mathbb{R}^n$ by
	\[
	x^k
	:=
	\sum_{i \in I^+_{\bar x}} \bar\alpha_i b_i
	+ \sum_{i \in I} \frac{1}{k} b_i
	+ \sum_{j \in J} \frac{1}{k} u_j
	+ \sum_{j \in I^-_{\bar x}} \bar\beta_j u_j .
	\]
	Clearly $x^k \to \bar x$ as $k \to \infty$, and 
	\[
	I^+_{x^k} = I^+_{\ox}\cup I, \qquad  
	I^\bullet_{x^k} = I^\bullet_{\ox}\setminus(I\cup J), \qquad  
	I^-_{x^k} = I^-_{\ox}\cup J,
	\quad \text{for all } k.
	\]
Combining this with \eqref{3.15}, we obtain
	\[
	\begin{cases}
		\langle z^k, b_i\rangle = \langle y^k, b_i\rangle,
		& 
		 \forall\, i \in I^+_{x^k}, \\[0.3em]
		\langle z^k, b_i\rangle \le \langle y^k, b_i\rangle,
		&  \forall\, i \in I^\bullet_{x^k}, \\[0.3em]
		\langle z^k, u_j\rangle \le 0,
		& \forall\, j \in I^\bullet_{x^k}, \\[0.3em]
		\langle z^k, u_j\rangle = 0,
		& \forall\, j \in I^-_{x^k}.
	\end{cases}
	\]
	It follows from \eqref{kq} that $z^k \in \widehat{D}^{\,*} P_K(x^k)(y^k)$ for every $k$.  
	Passing to the limit and applying the definition of the Mordukhovich coderivative, 
	we conclude that $z \in D^{*}P_K(\bar x)(y)$.
	
	\medskip
	\textbf{(``\(\subset\)''.)}
	Now assume that $z \in D^{*}P_K(\bar x)(y)$. By definition, there exist sequences
	\[
	x^k \to \bar x,\qquad y^k \to y,\qquad z^k \to z,
	\]
	with $z^k \in \widehat D^{\,*}P_K(x^k)(y^k)$ for all $k$.  
	Hence, using \eqref{kq}, we have for each $k$:
	\begin{equation}\label{32new}
				\begin{cases}
			\langle z^k, b_i\rangle = \langle y^k, b_i\rangle,
			& 
			\forall\, i \in I^+_{x^k}, \\[0.3em]
			\langle z^k, b_i\rangle \le \langle y^k, b_i\rangle,
			&  \forall\, i \in I^\bullet_{x^k}, \\[0.3em]
			\langle z^k, u_j\rangle \le 0,
			& \forall\, j \in I^\bullet_{x^k}, \\[0.3em]
			\langle z^k, u_j\rangle = 0,
			& \forall\, j \in I^-_{x^k}.
		\end{cases}
		\end{equation}
Since the collection of all subsets of \(N\) is finite, by passing to a subsequence if necessary, we may assume without loss of generality that there exist disjoint subsets \(I', J' \subset N\) such that
\[
I^+_{x^k}=I', \qquad I^-_{x^k}=J',
\qquad \text{and} \qquad
I^\bullet_{x^k}=N\setminus (I'\cup J')
\]
for all \(k\in\mathbb{N}^\ast\).
Note that \(x^k \to \bar x\), hence \(
I^+_{\bar x} \subset I' \ \text{and} \ I^-_{\bar x} \subset J'.
\)
Define
\[
I := I' \setminus I^+_{\bar x}, \qquad J := J' \setminus I^-_{\bar x}.
\]
Then we have
\[
I^+_{x^k}=I^+_{\bar x} \cup I,\ \
I^-_{x^k}=I^-_{\bar x} \cup J\
\ \text{and} \
I^\bullet_{x^k}=I^\bullet_{\bar x} \setminus (I\cup J),\ \operatorname{for\ all}\  k\in\mathbb{N}^\ast.
\]
	Taking limits in \eqref{32new} along this subsequence yields
	\[
	\begin{cases}
		\langle z, b_i\rangle = \langle y, b_i\rangle,
		&  \forall\, i \in I^+_{\bar x} \cup I, \\[0.3em]
		\langle z, b_i\rangle \le \langle y, b_i\rangle,
		&  \forall\, i \in I^\bullet_{\bar x} \setminus (I\cup J), \\[0.3em]
		\langle z, u_i\rangle \le 0,
		& \forall\, i \in I^\bullet_{\bar x} \setminus (I\cup J), \\[0.3em]
		\langle z, u_j\rangle = 0,
		& \forall\, j \in I^-_{\bar x} \cup J.
	\end{cases}
	\]
	These are exactly the conditions characterizing the set on the right–hand side of \eqref{kq22}.  
	Thus $z$ belongs to that set, and the proof is complete.
\end{proof}
	
Analogously to Corollary~\ref{Co1}, when $K$ is a generalized orthant we obtain the following
coderivative formula.

\begin{Corollary}\label{Co2}
	Let $K \subset \mathbb{R}^n$ be a generalized orthant generated by an orthogonal basis 
	$\{b_1,\dots,b_n\}$, and let $\bar x \in \mathbb{R}^n$. 
	Then the Mordukhovich coderivative of the metric projection $P_K$ at $\bar x$ is given by  
	\begin{equation}\label{kq3}
		\begin{aligned}
			D^\ast P_K(\bar x)(y)
			&=
		\bigcup_{
			I, J\subset I^\bullet_{\bar x},\ I\cap J=\emptyset
		}
		\left\{
		\begin{aligned}
			z\in\mathbb{R}^n\ \Big|\ &
			\langle z,b_i\rangle=\langle y,b_i\rangle,
			&& \forall\, i\in I^+_{\bar x}\cup I, \\[2mm]
			&0\le\langle z,b_i\rangle\le \langle y,b_i\rangle,
			&& \forall\, i\in I^\bullet_{\bar x}\setminus(I\cup J), \\[2mm]
			&\langle z,b_i\rangle=0,
			&& \forall\, i\in I^-_{\bar x}\cup J
		\end{aligned}
		\right\},\\[1mm]
			&\qquad\forall\, y \in \mathbb{R}^n .
		\end{aligned}
	\end{equation}
\end{Corollary}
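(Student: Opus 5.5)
The plan is to obtain Corollary~\ref{Co2} directly from Theorem~\ref{Thm34} by specializing the dual generators, just as Corollary~\ref{Co1} was obtained from Theorem~\ref{Thm1}. First I will check that the orthogonal case falls under the hypotheses of Theorem~\ref{Thm34}. Since $\{b_1,\dots,b_n\}$ is orthogonal, the vectors $u_j=-b_j/\|b_j\|^2$ satisfy $\langle u_j,b_i\rangle=-\delta_{ij}$. They also satisfy $\langle u_j,u_k\rangle=0\le 0$ for $j\neq k$, so $K$ is an isotone projection cone by \cite[Theorem~8]{IN92}. The representation of $\bar x$ and the index sets $I^+_{\bar x},I^\bullet_{\bar x},I^-_{\bar x}$ are then the ones used in Theorem~\ref{Thm34}.

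Next I substitute $u_j=-b_j/\|b_j\|^2$ into each block of \eqref{kq22}, for fixed disjoint $I,J\subset I^\bullet_{\bar x}$. The conditions $\langle z,u_j\rangle=0$ for $j\in I^-_{\bar x}\cup J$ become $\langle z,b_j\rangle=0$, because multiplying by the nonzero scalar $-\|b_j\|^2$ preserves equalities. The conditions $\langle z,u_j\rangle\le 0$ for $j\in I^\bullet_{\bar x}\setminus(I\cup J)$ become $\langle z,b_j\rangle\ge 0$, because multiplying by a negative scalar reverses inequalities. On the same index set these combine with $\langle z,b_i\rangle\le\langle y,b_i\rangle$ to give $0\le\langle z,b_i\rangle\le\langle y,b_i\rangle$. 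The equality block on $I^+_{\bar x}\cup I$ is unchanged. Taking the union over all admissible pairs $(I,J)$ then yields exactly \eqref{kq3}.

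There is no real obstacle here. The only point needing care is tracking the sign reversal in the inequality blocks and noting that the union ranges over the same family of pairs $(I,J)$ on both sides.
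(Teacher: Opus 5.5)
Your proposal is correct and follows the paper's own route: the paper also obtains this corollary by substituting $u_j=-b_j/\|b_j\|^2$ into Theorem~\ref{Thm34}, in the same way Corollary~\ref{Co1} was obtained. Your explicit check that $\langle u_j,u_k\rangle=0$ for $j\neq k$ is a detail the paper leaves implicit; it confirms that $K$ is an isotone projection cone, so Theorem~\ref{Thm34} applies.
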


\begin{proof}
	Proceeding in the same manner as in the proof of Corollary~\ref{Co1} and using Theorem~\ref{Thm34}, we obtain
\eqref{kq3}.
\end{proof}
	
\begin{Remark}
	{\rm 
		When $K$ is the positive cone in $\mathbb{R}^n$, formula~\eqref{kq3} coincides with the result of 
		\cite[Theorem~3.4]{HQ}.}
\end{Remark}

We now present an example illustrating the conclusion of
Theorems~\ref{Thm1} and~\ref{Thm34}.

\begin{Example}{\rm
		In $\mathbb{R}^2$, consider the nonnegative monotone cone
		\[
		K := \{x=(x_1,x_2)\in\mathbb{R}^2 \mid x_1 \ge x_2 \ge 0\}.
		\]
		This cone is an isotone projection cone and admits the latticial generators
		\[
		b_1=(1,0),\qquad b_2=(1,1),
		\]
		with the associated polar generators
		\[
		u_1=(-1,1),\qquad u_2=(0,-1).
		\]
		
	For every $z=(z_1,z_2)\in \mathbb{R}^2$, the following table lists the inner products $\langle z,b_i\rangle$ and $\langle z,u_j\rangle$:
	
\begin{figure}[h]
	\centering
	\includegraphics[width=0.7\textwidth,height=0.5\textheight,keepaspectratio]{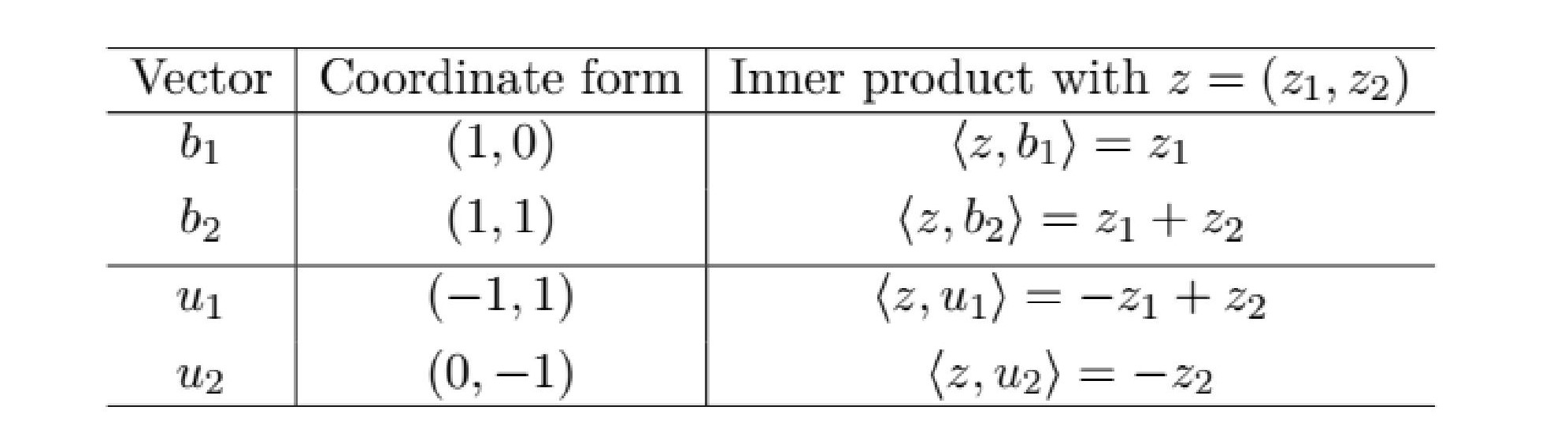}
\end{figure}

	\medskip
	Let $\bar x=(\bar x_1,\bar x_2)\in\mathbb{R}^2$. We analyze several cases.
		
		\medskip
		\noindent\textbf{Case 1.}
		If $\bar x_1>\bar x_2>0$, then $\bar x\in\operatorname{int}K$ and
		\[
		I^+_{\bar x}=\{1,2\},\qquad I^\bullet_{\bar x}=I^-_{\bar x}=\emptyset.
		\]
		Hence,
		\begin{equation}\label{1}
			\widehat D^*P_K(\bar x)(y)=D^*P_K(\bar x)(y)=\{y\}.
		\end{equation}
		
		\medskip
		\noindent\textbf{Case 2.}
		Assume $\bar x_1>0\ge \bar x_2$, so that $I^+_{\bar x}=\{1\}$.
		
		If $\bar x_2<0$, then $I^\bullet_{\bar x}=\emptyset$ and $I^-_{\bar x}=\{2\}$, and
		\begin{equation}\label{2}
			\widehat D^*P_K(\bar x)(y)=D^*P_K(\bar x)(y)=\{(y_1,0)\}.
		\end{equation}
		
		If $\bar x_2=0$, then $I^\bullet_{\bar x}=\{2\}$ and $I^-_{\bar x}=\emptyset$, and
		\[
		\widehat D^*P_K(\bar x)(y)
		=\{z=(y_1, z_2) \mid  0\le z_2\le y_2\},
		\]
		\begin{equation}\label{3}
			D^*P_K(\bar x)(y)
			=\{z = (y_1, z_2) \mid 0\le z_2\le y_2\}
			\cup\{y,(y_1,0)\}.
		\end{equation}
		
		\medskip
		\noindent\textbf{Case 3.}
		Suppose $-\bar x_2<\bar x_1\le \bar x_2$, yielding $I^+_{\bar x}=\{2\}$.
		
		If $\bar x_1<\bar x_2$, then $I^\bullet_{\bar x}=\emptyset$ and $I^-_{\bar x}=\{1\}$, and
		\begin{equation}\label{4}
			\widehat D^*P_K(\bar x)(y)=D^*P_K(\bar x)(y)
			=\left\{\left(\tfrac{y_1+y_2}{2},\tfrac{y_1+y_2}{2}\right)\right\}.
		\end{equation}
		
		If $\bar x_1=\bar x_2$, then $I^\bullet_{\bar x}=\{1\}$ and $I^-_{\bar x}=\emptyset$, and
		\[
		\widehat D^*P_K(\bar x)(y)
		=\left\{z=(z_1,y_1+y_2-z_1)\mid \tfrac{y_1+y_2}{2}\le z_1\le y_1\right\},
		\]
		\begin{equation}\label{5}
			\begin{aligned}
				D^*P_K(\bar x)(y)
				&=\left\{z=(z_1,y_1+y_2-z_1)\mid \tfrac{y_1+y_2}{2}\le z_1\le y_1\right\}\\
				&\quad\cup\left\{y,
				\left(\tfrac{y_1+y_2}{2},\tfrac{y_1+y_2}{2}\right)\right\}.
			\end{aligned}
		\end{equation}
		
		\medskip
		\noindent\textbf{Case 4.}
		Let $-\bar x_2\le \bar x_1\le 0$, so that $I^+_{\bar x}=\emptyset$.
		
		If $-\bar x_2<\bar x_1<0$, then $I^\bullet_{\bar x}=\emptyset$ and $I^-_{\bar x}=\{1,2\}$, and
		\begin{equation}\label{6}
			\widehat D^*P_K(\bar x)(y)=D^*P_K(\bar x)(y)=\{(0,0)\}.
		\end{equation}
		
		If $\bar x_1=0>-\bar x_2$, then $I^\bullet_{\bar x}=\{1\}$ and $I^-_{\bar x}=\{2\}$, and
		\[ \widehat D^*P_K(\bar x)(y)=\{(z_1,0)\mid 0\le z_1\le y_1\}, \]
		\begin{equation}\label{7}
			D^*P_K(\bar x)(y)
			=\{(z_1,0)\mid 0\le z_1\le y_1\}\cup\{(y_1,0),(0,0)\}.
		\end{equation}
		
		If $\bar x_1=-\bar x_2<0$, then $I^\bullet_{\bar x}=\{2\}$ and $I^-_{\bar x}=\{1\}$, and
			\[ \widehat D^*P_K(\bar x)(y)=\{(z_1,z_1)\mid 0\le z_1\le \tfrac{y_1+y_2}{2}\}, \]
		\begin{equation}\label{8}
			D^*P_K(\bar x)(y)
			=\{(z_1,z_1)\mid 0\le z_1\le \tfrac{y_1+y_2}{2}\}
			\cup\left\{(0,0),
			\left(\tfrac{y_1+y_2}{2},\tfrac{y_1+y_2}{2}\right)\right\}.
		\end{equation}
		
		If $\bar x_1=\bar x_2=0$, then $I^\bullet_{\bar x}=\{1,2\}$ and $I^-_{\bar x}=\emptyset$, and
		\begin{equation}\label{9}
			\widehat D^*P_K(0, 0)(y)
			=\{z=(z_1,z_2)\mid 0\le z_2\le z_1\le y_1,\ z_1+z_2\le y_1+y_2\}.
		\end{equation}
		
		Finally, the Mordukhovich coderivative $D^*P_K(0, 0)(y)$ is obtained as the union
		of the sets given in \eqref{1}--\eqref{9}.
	}
\end{Example}

	\section{Applications to Stability of Solution Mappings to Complementarity Problems and the Covering Constant}

In this section, we apply the coderivative formulas derived in the previous section. We begin by analyzing the covering constant of the projection operator and then derive verifiable conditions for the Aubin property of solution mappings to complementarity problems associated with isotone projection cones.

We begin by recalling the notion of the covering property and the associated covering constant for set-valued mappings. This property is equivalent to metric regularity, one of the fundamental concepts in variational analysis, and provides a quantitative tool for studying the local behavior and stability of mappings.
\begin{Definition} [see \cite{M18,RW98}]
	Let $\Phi:\mathbb{R}^n\rightrightarrows\mathbb{R}^m$ be a set-valued mapping, and let
	$U\subset\mathbb{R}^n$ and $V\subset\mathbb{R}^m$ be nonempty sets.
	We say that $\Phi$ has the \emph{covering property} on $U$ relative to $V$ if there exists
	$\kappa>0$ such that
	\begin{equation}\label{cp}
		\Phi(x)\cap V + \kappa r\,\mathbb{B}
		\subset
		\Phi(x+r\,\mathbb{B}),
		\qquad
		\text{whenever } x+r\,\mathbb{B}\subset U,\ r>0.
	\end{equation}
	
	The mapping $\Phi$ is said to have the \emph{$\kappa$-covering property} around
	$(\bar x,\bar y)\in\operatorname{gph}\Phi$ if there exist neighborhoods $U$ of $\bar x$
	and $V$ of $\bar y$ such that \eqref{cp} holds for all $x\in U$ and all $r>0$ with
	$x+r\,\mathbb{B}\subset U$.
	
	The supremum of all such $\kappa$ is called the \emph{exact covering bound} of $\Phi$
	at $(\bar x,\bar y)$ and is denoted by
	\[
	\operatorname{cov}\Phi(\bar x,\bar y).
	\]
	
	The \emph{covering constant} of $\Phi$ at $(\bar x,\bar y)$ is defined as
	\[
	\widehat{\alpha}(\Phi,\bar x,\bar y)
	:=
	\sup_{\eta>0}
	\inf\Bigl\{
	\|z^\ast\| \ \Big|\
	z^\ast\in \widehat D^\ast\Phi(x,y)(w^\ast),\ 
	x\in\mathbb{B}(\bar x,\eta),\
	y\in\Phi(x)\cap\mathbb{B}(\bar y,\eta),\
	\|w^\ast\|=1
	\Bigr\}.
	\]
\end{Definition}

We now apply the above notion to the metric projection onto isotone projection cones. Using the Fréchet coderivative formulas established in the previous section, we explicitly compute the covering constant of the projection operator.
\begin{Theorem}
	Let $\bar x\in\mathbb{R}^n$ and $\bar y=P_K(\bar x)$, where $K\subset\mathbb{R}^n$ is an isotone projection cone.
	Then the covering constant of the metric projection operator $P_K$ satisfies the following properties:
	\begin{enumerate}
		\item[(i)]
		$\widehat \alpha(P_K,\bar x,\bar y)=1$ for all $\bar x\in\operatorname{int}K$.
		\item[(ii)]
		$\widehat \alpha(P_K,\bar x,\bar y)=0$ for all $\bar x\notin\operatorname{int}K$.
	\end{enumerate}
\end{Theorem}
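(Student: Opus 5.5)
The plan is to compute the infimum in the definition of $\widehat\alpha(P_K,\bar x,\bar y)$ directly from Theorem~\ref{Thm1}, in the two cases separately. For part (i), note that $\operatorname{int}K$ is open. So for $\eta>0$ small enough every $x\in\mathbb{B}(\bar x,\eta)$ lies in $\operatorname{int}K$. For such $x$ all coefficients $\alpha_i$ in $x=\sum_i\alpha_i b_i$ are positive, which gives $I^+_x=N$ and $I^\bullet_x=I^-_x=\emptyset$. By Theorem~\ref{Thm1}, $\widehat D^\ast P_K(x)(w^\ast)=\{z\mid \langle z,b_i\rangle=\langle w^\ast,b_i\rangle\ \forall i\in N\}=\{w^\ast\}$, since $\{b_i\}$ is a basis. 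Equivalently, by Theorem~\ref{df}(i), $P_K$ is the identity near $\bar x$. Hence the infimum equals $\|w^\ast\|=1$ for every small $\eta$. The infimum is nonincreasing in $\eta$, so the supremum over $\eta$ equals $1$.

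For part (ii), I would show that for every $\eta>0$ the infimum is $0$. It suffices to find, for each $\eta$, a point $x\in\mathbb{B}(\bar x,\eta)$ and a unit vector $w^\ast$ with $0\in\widehat D^\ast P_K(x)(w^\ast)$. (Note that $y=P_K(x)$ is automatically close to $\bar y$ once $\eta$ is small, because $P_K$ is nonexpansive, so the constraint on $y$ causes no trouble.) Since $\bar x\notin\operatorname{int}K$, either $I^-_{\bar x}\neq\emptyset$ or $I^\bullet_{\bar x}\neq\emptyset$. Here we use that $\bar x\in\operatorname{int}K$ iff $I^+_{\bar x}=N$, by \eqref{eq:K_condition} and linear independence. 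In either case there is an index $k\notin I^+_{\bar x}$. By Lemma~\ref{lem2}, or by the explicit perturbation used in the proof of Theorem~\ref{Thm34}, I would pick
\[
x:=\sum_{i\in I^+_{\bar x}}\bar\alpha_i b_i+\sum_{j\in I^-_{\bar x}}\bar\beta_j u_j+\sum_{j\in I^\bullet_{\bar x}}\tfrac1m u_j ,
\]
with $m$ large. Then $x\in\mathbb{B}(\bar x,\eta)$, $I^+_x=I^+_{\bar x}$, $I^\bullet_x=\emptyset$ and $I^-_x=N\setminus I^+_{\bar x}\neq\emptyset$.

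Now choose $w^\ast$ as a unit vector orthogonal to $\operatorname{span}\{b_i: i\in I^+_{x}\}$. Such a vector exists because $|I^+_x|<n$. Then $z=0$ satisfies every condition in \eqref{kq} at $x$: we have $\langle 0,b_i\rangle=0=\langle w^\ast,b_i\rangle$ for $i\in I^+_x$, and $\langle 0,u_j\rangle=0$ for $j\in I^-_x$, while there are no $\bullet$-conditions. Hence $0\in\widehat D^\ast P_K(x)(w^\ast)$, the infimum is $0$ for each $\eta$, and therefore $\widehat\alpha=0$.

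The main points needing care are the characterization $\bar x\in\operatorname{int}K\iff I^+_{\bar x}=N$ and the verification that the perturbed point has the claimed index sets. The latter follows from the uniqueness in Lemma~\ref{lem1}, since all coefficients in the displayed representation of $x$ are positive. One could even avoid perturbing when $I^\bullet_{\bar x}=\emptyset$ and take $x=\bar x$ directly. I expect no serious obstacle beyond this bookkeeping.
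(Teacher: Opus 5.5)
Your proof is correct, and part (i) takes a genuinely different route from the paper.

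\textbf{Part (i).} The paper uses strict differentiability with $\nabla P_K(\bar x)=I$ (Theorem~\ref{df}). It then cites external results that identify $\widehat\alpha$ with the exact covering bound and express it as $\inf\{\|\nabla P_K(\bar x)^*w^*\| : \|w^*\|=1\}$. You instead evaluate the definition of $\widehat\alpha$ directly. For small $\eta$ the whole ball $\mathbb{B}(\bar x,\eta)$ lies in $\operatorname{int}K$, where Theorem~\ref{Thm1} gives $\widehat D^*P_K(x)(w^*)=\{w^*\}$. Monotonicity of the inner infimum in $\eta$ then finishes the argument. Your version is self-contained and needs only the paper's own coderivative formula. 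The paper's version is shorter given the literature and places the result in the standard covering/metric-regularity framework.

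\textbf{Part (ii).} This is essentially the paper's argument. You perturb $\bar x$ along polar generators so that some index lands in $I^-_x$. You then take a unit $w^*$ annihilating $\{b_i : i\in I^+_x\}$, so that $0\in\widehat D^*P_K(x)(w^*)$. The paper moves along a single $u_j$ with $j\notin I^+_{\bar x}$ and takes $w^*=u_j/\|u_j\|$. That choice also requires checking the $\bullet$-conditions at $x$, which hold because $\langle u_j,b_i\rangle=0$ for $i\neq j$. You instead push all of $I^\bullet_{\bar x}$ into $I^-_x$, which removes the $\bullet$-conditions altogether. You also justify the index sets of the perturbed point through the uniqueness in Lemma~\ref{lem1}, a step the paper only asserts.

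A small remark: the constraint $P_K(x)\in\mathbb{B}(\bar y,\eta)$ holds for every $\eta$, not just small ones, since $\|P_K(x)-\bar y\|\le\|x-\bar x\|\le\eta$.
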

\begin{proof}
	\textbf{(i)}
	Let $\bar x\in\operatorname{int}K$.  
	By Theorem~\ref{df}, the projection mapping $P_K$ is strictly Fréchet differentiable at $\bar x$ with
	\[
	\nabla P_K(\bar x)=I_{\mathbb{R}^n}.
	\]
	Hence, by \cite[Theorems~1.57 and~4.1]{M18}, the regular covering constant of $P_K$ at $(\bar x,\bar y)$ satisfies
	\[
	\begin{aligned}
		\widehat{\alpha}(P_K,\bar x,\bar y)
		&=\operatorname{cov} P_K(\bar x,\bar y) \\
		&=\inf\bigl\{ \|\nabla P_K(\bar x)^* y^*\| \;\big|\; \|y^*\|=1 \bigr\} \\
		&=\inf\bigl\{ \|y^*\| \;\big|\; \|y^*\|=1 \bigr\}
		=1.
	\end{aligned}
	\]
	
	\medskip
	\textbf{(ii)}
	Assume that $\bar x\notin\operatorname{int}K$.  
	Then there exists an index $j\in N$ such that $j\notin I^+_{\bar x}$.  
	For any $\eta>0$, define
	\[
	x:=\bar x+\frac{\eta}{2\|u_j\|}\,u_j.
	\]
	Clearly, $x\in\mathbb{B}(\bar x,\eta)$ and $j\in I^-_x$.  
	Let
	\[
	y:=\frac{u_j}{\|u_j\|}.
	\]
Then,
\(\|y\|=1\ \operatorname{and }\
\langle y, b_i\rangle =0,
\ \forall\, i\neq j \in I^-_{\bar x}.
\)
By Theorem~\ref{Thm1}, we obtain
	\[
	0\in \widehat D^* P_K(x)(y).
	\]
Therefore,
\[
\inf\left\{
\|z\| \;\middle|\;
z\in \widehat D^*P_K(x)(y),\;
x\in\mathbb{B}(\bar x,\eta),\;
\|y\|=1
\right\}=0,
\qquad \forall\, \eta>0.
\]
Consequently,
\[
\widehat{\alpha}(P_K,\bar x,\bar y)
=
\sup_{\eta>0}
\inf\left\{
\|z\| \;\middle|\;
z\in \widehat D^*P_K(x)(y),\;
x\in\mathbb{B}(\bar x,\eta),\;
\|y\|=1
\right\}
=0.
\]
\end{proof}

We consider the parametric nonlinear complementarity problem associated with a closed convex cone:
\begin{equation}\label{NCPp}
	\text{(NCP$_p$)}\qquad
	x\in K,\quad F(p,x)\in -K^\circ,\quad \langle x,F(p,x)\rangle=0.
\end{equation}
Here, $K\subset\mathbb{R}^n$ is a closed convex cone, 
$F:\mathbb{R}^m\times\mathbb{R}^n\to\mathbb{R}^n$ is a continuously differentiable mapping,
$p\in\mathbb{R}^m$ denotes a parameter vector, and 
$K^\circ$ stands for the polar cone of $K$.

Problem \eqref{NCPp} can be equivalently reformulated as a fixed-point problem
associated with the metric projection onto the cone $K$. More precisely,
a vector $x\in\mathbb{R}^n$ solves \eqref{NCPp} if and only if it satisfies
\begin{equation}\label{FixedPoint}
	x = P_K\bigl(x - F(p,x)\bigr),
\end{equation}
where $P_K$ denotes the metric projection onto $K$, (see e.g. Proposition 2 in \cite{IN90}).	
	Denote by $S: \R^m\rightrightarrows\R^n$ the solution mapping associated with problem \eqref{NCPp},
	defined by
	\begin{equation}\label{SolMap}
		S(p):=\bigl\{x\in K \,\big|\, F(p,x)\in -K^\circ,\ \langle x,F(p,x)\rangle=0\bigr\}.
	\end{equation}
We have
\begin{equation} \label{gphS}
	\begin{array}{rl}
		\operatorname{gph} S
		&= \left\{(p,x)\in \mathbb{R}^m \times \mathbb{R}^n
		\;\middle|\;
		x = P_K\bigl(x - F(p,x)\bigr)
		\right\} \\[6pt]
		&= \left\{(p,x)\in \mathbb{R}^m \times \mathbb{R}^n
		\;\middle|\;
		G(p,x)\in \operatorname{gph} P_K
		\right\},
	\end{array}
\end{equation}
where the mapping \(G:\mathbb{R}^m \times \mathbb{R}^n \to \mathbb{R}^n \times \mathbb{R}^n\) is defined by
\[
G(p,x):=
\begin{pmatrix}
	x - F(p,x)\\
	x
\end{pmatrix}.
\]

The following result, derived from \eqref{gphS}, establishes a relationship between the Mordukhovich coderivative of the solution mapping $S$ and the projection operator $P_K.$
		\begin{Theorem}\label{thm:coderivativeS}
		Let $K\subset\mathbb{R}^n$ be an isotone projection cone in $\R^n$. Assume that the following qualification condition holds:
	\begin{equation} \label{dk45}
	\left.
	\begin{array}{l}
		-(F'_p(\bar p,\bar x))^{T} v^* = 0,\\[4pt]
		u^* = v^* - (F'_x(\bar p,\bar x))^{T} v^*,\\[4pt]
		v^* \in D^*P_K\big(\bar x - F(\bar p,\bar x)\big)(u^*)
	\end{array}
	\right\}
	\;\Longrightarrow\;
	u^* = 0.
\end{equation}
 Then for every $x^* \in \mathbb{R}^n$ we have
	\begin{equation} \label{Thm41}
	D^*S(\bar p,\bar x)(x^*)
	\subset
	\left\{
	-(F'_p(\bar p,\bar x))^{T} v^*
	\;\Big|\;
	\begin{array}{l}
		u^* = x^* + v^* - (F'_x(\bar p,\bar x))^{T} v^*,\\[4pt]
		v^* \in D^*P_K\big(\bar x - F(\bar p,\bar x)\big)(u^*)
	\end{array}
	\right\}.
	\end{equation}
	\end{Theorem}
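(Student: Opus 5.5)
The plan is to view $\operatorname{gph} S$ as the preimage $G^{-1}(\operatorname{gph} P_K)$ from \eqref{gphS}. We then apply the standard calculus rule for limiting normals to inverse images under smooth mappings, e.g. \cite[Theorem~1.17]{M18} or \cite[Theorem~6.14]{RW98}. Since $F$ is continuously differentiable, $G$ is $C^1$ with Jacobian
\[
\nabla G(\bar p,\bar x)=
\begin{pmatrix}
-F'_p(\bar p,\bar x) & I-F'_x(\bar p,\bar x)\\
0 & I
\end{pmatrix},
\]
and $\operatorname{gph} P_K$ is closed because $P_K$ is continuous. Hence the rule applies at $(\bar p,\bar x)$, with $G(\bar p,\bar x)=(\bar x-F(\bar p,\bar x),\bar x)$, and $\bar x=P_K(\bar x-F(\bar p,\bar x))$ because $\bar x\in S(\bar p)$. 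Provided the qualification condition
\[
(v^*,w^*)\in N_{\operatorname{gph}P_K}(G(\bar p,\bar x)),\quad \nabla G(\bar p,\bar x)^T(v^*,w^*)=0 \ \Longrightarrow\ (v^*,w^*)=0
\]
holds, it yields
\[
N_{\operatorname{gph}S}(\bar p,\bar x)\subset \nabla G(\bar p,\bar x)^T N_{\operatorname{gph}P_K}(G(\bar p,\bar x)).
\]

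The first step is to translate this into coderivative language. Take $(v^*,w^*)\in N_{\operatorname{gph}P_K}$ and write $w^*=-u^*$, so that the coderivative definition gives $v^*\in D^*P_K(\bar x-F(\bar p,\bar x))(u^*)$. The transpose then acts as
\[
\nabla G^T(v^*,w^*)=\bigl(-(F'_p)^Tv^*,\ v^*-(F'_x)^Tv^*-u^*\bigr).
\]
Now let $p^*\in D^*S(\bar p,\bar x)(x^*)$, which means $(p^*,-x^*)\in N_{\operatorname{gph}S}(\bar p,\bar x)$. Matching components gives $p^*=-(F'_p)^Tv^*$ and $-x^*=v^*-(F'_x)^Tv^*-u^*$. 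The second equation rearranges to $u^*=x^*+v^*-(F'_x)^Tv^*$, which is exactly \eqref{Thm41}.

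The step requiring the most care is verifying the qualification condition from \eqref{dk45}. Suppose $\nabla G^T(v^*,-u^*)=0$ with $v^*\in D^*P_K(\cdot)(u^*)$. Then $-(F'_p)^Tv^*=0$ and $u^*=v^*-(F'_x)^Tv^*$, so \eqref{dk45} gives $u^*=0$. We still need $v^*=0$, and this is not written explicitly in \eqref{dk45}. I would obtain it from the Lipschitz continuity of $P_K$: by the Mordukhovich criterion, a Lipschitz single-valued mapping has $D^*P_K(\cdot)(0)=\{0\}$ (\cite[Theorem~1.44]{M18}), so $u^*=0$ forces $v^*=0$. Should \eqref{dk45} be read more loosely, this Lipschitz argument still closes the gap, and the inclusion follows.
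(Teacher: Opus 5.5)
Your proposal is correct and follows the same route as the paper, which simply applies the inverse-image normal cone rule \cite[Theorem~6.14]{RW98} to $\operatorname{gph}S=G^{-1}(\operatorname{gph}P_K)$ from \eqref{gphS}. You also spell out two details the paper leaves implicit: the translation of $\nabla G(\bar p,\bar x)^T N_{\operatorname{gph}P_K}$ into coderivative form, and the fact that \eqref{dk45} only gives $u^*=0$, so $v^*=0$ has to come from $D^*P_K(\cdot)(0)=\{0\}$, which holds because $P_K$ is nonexpansive.
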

\begin{proof}
Using the representation \eqref{gphS}, we apply \cite[Theorem 6.10]{M94} or \cite[Theorem 6.14]{RW98} to obtain \eqref{Thm41}.
	
\end{proof}

\begin{Remark} \label{R42}
If $F'_p(\bar{p}, \bar{x})$ is surjective, then the condition \eqref{dk45} is satisfied. 
Moreover, the Jacobian of $G$ at $(\bar p, \bar x)$ is given by
\[
\nabla G(\bar{p}, \bar{x})=
\begin{pmatrix}
	- F'_p(\bar p,\bar x) & I - F'_x(\bar p,\bar x)\\
	0 & I
\end{pmatrix},
\]
which has full row rank due to the surjectivity of $F'_p(\bar{p}, \bar{x})$. Hence, the inclusion in \eqref{Thm41} becomes an equality (see \cite[Exercise 10.7]{RW98} and \cite[Proposition 1.112]{M18}).
\end{Remark}

We next investigate the Aubin property of set-valued mappings. 
This property plays a fundamental role in variational analysis and sensitivity theory, 
as it characterizes a local Lipschitz-like behavior of set-valued mappings under perturbations. 
Moreover, it is closely related to several regularity notions, including metric regularity and coderivative criteria. 
For a detailed treatment of the Aubin property and related topics, we refer the reader to \cite{DR14} and the references therein.

Recall (see, e.g., \cite{DR14}) that a set-valued mapping 
$\Phi:\mathbb{R}^s \rightrightarrows \mathbb{R}^n$ is said to possess the Aubin 
property around $(\bar y,\bar x)\in \operatorname{gph}\Phi$ if there exist constants 
$\kappa>0$ and $r>0$ such that
\[
\Phi(y_1)\cap \B_r(\bar x) \subset \Phi(y_2) 
+ \kappa \|y_1 - y_2\|\, \B
\quad \text{for all } y_1,y_2 \in \B_r(\bar y).
\]
It is well known that this property is equivalent to the metric regularity of the 
inverse mapping $\Phi^{-1}$ (see \cite{DR14}). Moreover, it admits a characterization 
in terms of the Mordukhovich coderivative. We next present the corresponding result.
\begin{Lemma}[Mordukhovich criterion, see \cite{DR14,M18,RW98}] \label{Mo}
Let \(\Phi:\mathbb{R}^s \rightrightarrows \mathbb{R}^n\) and let \((\bar y,\bar x)\in \operatorname{gph}\Phi\). 
Suppose that \(\operatorname{gph}\Phi\) is locally closed at \((\bar y,\bar x)\). 
Then \(\Phi\) has the Aubin property around \((\bar y,\bar x)\) if and only if
\[
D^*\Phi(\bar y,\bar x)(0)=\{0\}.
\]
\end{Lemma}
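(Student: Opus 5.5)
The Mordukhovich criterion (Lemma~\ref{Mo}) is a classical result, so the plan is to assemble it from the standard equivalences rather than from the projection formulas of Section~3. First, I would localize: since $\operatorname{gph}\Phi$ is locally closed at $(\bar y,\bar x)$, I fix a closed neighborhood $W$ of $(\bar y,\bar x)$ with $\operatorname{gph}\Phi\cap W$ closed. Neither the Aubin property nor the limiting normal cone $N_{\operatorname{gph}\Phi}(\bar y,\bar x)$ changes if $\Phi$ is replaced by the mapping whose graph is $\operatorname{gph}\Phi\cap W$, because both notions depend only on the graph near the reference point. Next, I would use the equivalence already recalled in the text: the Aubin property of $\Phi$ around $(\bar y,\bar x)$ holds if and only if $\Phi^{-1}$ is metrically regular around $(\bar x,\bar y)$. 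I would also use the coderivative identity $D^*\Phi(\bar y,\bar x)(0)=\{0\}$ if and only if $0\in D^*\Phi^{-1}(\bar x,\bar y)(v)$ implies $v=0$. This identity is immediate from the definition, since $(v,-w)\in N_{\operatorname{gph}\Phi}$ exactly when $(-w,v)$, suitably signed, lies in $N_{\operatorname{gph}\Phi^{-1}}$, and the normal cone to a swapped graph is the swapped normal cone.

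\emph{Necessity.} I would show that the Aubin property with constant $\kappa$ forces $\|v\|\le\kappa\|w\|$ for every $v\in \widehat D^*\Phi(y,x)(w)$ at all graph points near $(\bar y,\bar x)$. Given $(v,-w)\in\widehat N_{\operatorname{gph}\Phi}(y,x)$, I would pick a direction $h$ with $\langle v,h\rangle\approx\|v\|\|h\|$ and perturb $y\to y+th$. The Aubin inclusion gives $x_t\in\Phi(y+th)$ with $\|x_t-x\|\le\kappa t\|h\|$. Inserting the pair $(y+th,x_t)$ into the limsup defining the Fréchet normal yields $\|v\|\|h\|-\kappa\|w\|\|h\|\le o(1)$, so $\|v\|\le\kappa\|w\|$. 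Passing to limits in the definition of $N_{\operatorname{gph}\Phi}$ then gives $\|v\|\le\kappa\|w\|$ for the limiting coderivative as well, and in particular $D^*\Phi(\bar y,\bar x)(0)=\{0\}$.

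\emph{Sufficiency.} This is the main obstacle. I would argue by contradiction. If the Aubin property fails, there are sequences $y_1^k,y_2^k\to\bar y$ and $x^k\in\Phi(y_1^k)$ with $x^k\to\bar x$ and $d(x^k,\Phi(y_2^k))>k\|y_1^k-y_2^k\|$. I would then apply the Ekeland variational principle to the lower semicontinuous function $(y,x)\mapsto\|y-y_2^k\|+\varepsilon_k\|\cdot\|$-penalty restricted to the closed set $\operatorname{gph}\Phi\cap W$; this is where local closedness is indispensable. The Fréchet (fuzzy) sum rule at the resulting Ekeland points produces Fréchet normals $(v^k,-w^k)$ with $\|w^k\|\to0$ and $\|v^k\|=1$ approximately. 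The delicate part is checking that the Ekeland points stay in the interior of $W$ and converge to $(\bar y,\bar x)$, which requires choosing $\varepsilon_k$ and the radii carefully. By compactness in finite dimensions, a subsequence gives $v^k\to v$ with $\|v\|=1$ and $v\in D^*\Phi(\bar y,\bar x)(0)$, contradicting the assumption. Alternatively, and more economically, I would cite \cite[Theorem~9.40]{RW98} or \cite[Theorem~4.10]{M18} directly for this direction, since the lemma is quoted from those sources.
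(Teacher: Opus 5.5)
Your proposal is correct. The paper gives no proof of this lemma: it imports the criterion from \cite{DR14,M18,RW98} and uses it only as a black box in Theorem~\ref{Thm4.4}. Your closing option of citing \cite[Theorem~9.40]{RW98} is therefore exactly the paper's route.

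Your reconstruction of the standard argument is also sound. For necessity, you test a Fr\'echet normal $(v,-w)$ at a nearby graph point against the points $(y+tv,x_t)$ produced by the Aubin inclusion; this gives $\|v\|\le\kappa\|w\|$, which survives passage to limiting normals. For sufficiency, you use Ekeland on the complete set $\operatorname{gph}\Phi\cap W$, a sum rule, and compactness of the unit sphere. What this buys over the bare citation is that it shows where each hypothesis enters. Local closedness is precisely what makes Ekeland's principle applicable. Finite dimensionality is what lets you extract a nonzero limit $v$; in infinite dimensions that step needs a normal compactness assumption and the mixed coderivative. The inverse-mapping/metric-regularity reformulation in your first paragraph is never used afterwards and can be dropped.

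The detail you flag but leave open does need a specific choice. Nothing forces $k\|y_1^k-y_2^k\|\to 0$: you only know $k\|y_1^k-y_2^k\|<d(x^k,\Phi(y_2^k))$, and this distance can be large or even infinite. Taking the Ekeland radius equal to $k\|y_1^k-y_2^k\|$ may therefore keep the Ekeland points away from $(\bar y,\bar x)$. A choice that works is as follows.

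Set $\tau_k:=\|y_1^k-y_2^k\|$, which is positive because $x^k\in\Phi(y_1^k)\setminus\Phi(y_2^k)$. Apply Ekeland to $f_k(y,x):=\|y-y_2^k\|$ on $\operatorname{gph}\Phi\cap W$, starting from $(y_1^k,x^k)$, with $\varepsilon=\tau_k$ and radius $\lambda_k:=\min\{k\tau_k,\sqrt{\tau_k}\}$. This gives three things.
\begin{itemize}
\item Since $\lambda_k\to 0$, the Ekeland point $(\hat y,\hat x)$ lies in $\operatorname{int}W$ and converges to $(\bar y,\bar x)$.
\item Since $\|\hat x-x^k\|\le\lambda_k\le k\tau_k$, you get $\hat x\notin\Phi(y_2^k)$, hence $\hat y\neq y_2^k$. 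So $f_k$ is smooth at $(\hat y,\hat x)$ with gradient of norm one, which is what yields $\|v^k\|\to 1$.
\item The penalty weight is $\tau_k/\lambda_k=\max\{1/k,\sqrt{\tau_k}\}\to 0$, which yields $w^k\to 0$.
\end{itemize}
With this choice your contradiction argument goes through as written.
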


Building upon the coderivative of the projection operator $P_K$  derived in the previous section, and combining it with  Theorem~\ref{thm:coderivativeS} and the Mordukhovich criterion (Lemma~\ref{Mo}), 
we obtain the following characterization of the Aubin property for the solution mapping $S$ at $(\bar p,\bar x)$.

\begin{Theorem} \label{Thm4.4}
	Let $K\subset \mathbb{R}^n$ be an isotone projection cone generated by a system 
	$\{b_i\}_{i=1}^n$ with the associated dual system $\{u_i\}_{i=1}^n$. 
	Assume that $F'_p(\bar p, \bar x)$ is surjective. 
	Then the solution mapping $S$ to problem \eqref{NCPp} has the Aubin property 
	around $(\bar p,\bar x)$ if and only if the following implication holds for all 
	$I,J \subset I^\bullet$ with $I\cap J=\emptyset$:
\begin{equation} \label{4.7}
	\left.
	\begin{aligned}
		\langle (F'_x(\bar p,\bar x))^{T} v^*, b_i\rangle &= 0 
		&\quad &\forall i\in I^+\cup I,\\[4pt]
		\langle (F'_x(\bar p,\bar x))^{T} v^*, b_i\rangle &\leq 0,\quad 
		\langle v^*, u_i\rangle \leq 0 
		&\quad &\forall i\in I^\bullet\setminus (I\cup J),\\[4pt]
		\langle v^*, u_i\rangle &= 0 
		&\quad &\forall i\in I^-\cup J
	\end{aligned}
	\right\}
	\;\Longrightarrow\;
	v^* = 0.
\end{equation}
	where 
	\[
	I^+ := I^+_{\bar x - F(\bar p,\bar x)},\quad 
	I^- := I^-_{\bar x - F(\bar p,\bar x)},\quad 
	I^\bullet := I^\bullet_{\bar x - F(\bar p,\bar x)}.
	\]
\end{Theorem}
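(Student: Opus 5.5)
The plan is to combine the Mordukhovich criterion (Lemma~\ref{Mo}) with the coderivative formula for $S$ from Theorem~\ref{thm:coderivativeS}. That formula is upgraded to an equality by Remark~\ref{R42}, and the resulting condition is made explicit via Theorem~\ref{Thm34}.

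\textbf{Step 1: closedness of the graph.} First check that $\operatorname{gph} S$ is closed, so that Lemma~\ref{Mo} applies. By \eqref{gphS}, $\operatorname{gph} S=G^{-1}(\operatorname{gph}P_K)$. Here $G$ is continuous and $\operatorname{gph}P_K$ is closed because $P_K$ is continuous. Hence $S$ has the Aubin property around $(\bar p,\bar x)$ if and only if $D^*S(\bar p,\bar x)(0)=\{0\}$.

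\textbf{Step 2: the qualification condition and the coderivative of $S$.} Verify \eqref{dk45}. Surjectivity of $F'_p(\bar p,\bar x)$ makes $(F'_p(\bar p,\bar x))^T$ injective. So $-(F'_p)^Tv^*=0$ forces $v^*=0$, and then $u^*=v^*-(F'_x)^Tv^*=0$. Thus Theorem~\ref{thm:coderivativeS} applies, and by Remark~\ref{R42} the inclusion \eqref{Thm41} is an equality. With $x^*=0$ this gives
\[
D^*S(\bar p,\bar x)(0)=\bigl\{-(F'_p(\bar p,\bar x))^Tv^* \;\big|\; v^*\in D^*P_K(\bar x-F(\bar p,\bar x))\bigl(v^*-(F'_x(\bar p,\bar x))^Tv^*\bigr)\bigr\}.
\]
Using injectivity of $(F'_p)^T$ once more, $D^*S(\bar p,\bar x)(0)=\{0\}$ holds if and only if every $v^*$ with $v^*\in D^*P_K(\bar x-F(\bar p,\bar x))(v^*-(F'_x)^Tv^*)$ is zero.

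\textbf{Step 3: translating via Theorem~\ref{Thm34}.} Apply Theorem~\ref{Thm34} at the point $\bar x-F(\bar p,\bar x)$, with the index sets $I^+,I^\bullet,I^-$ as in the statement, taking $z=v^*$ and $y=u^*=v^*-(F'_x)^Tv^*$. The membership means there exist disjoint $I,J\subset I^\bullet$ such that four conditions hold.
\begin{itemize}
\item The equalities $\langle v^*,b_i\rangle=\langle u^*,b_i\rangle$ for $i\in I^+\cup I$ become $\langle (F'_x)^Tv^*,b_i\rangle=0$.
\item The inequalities $\langle v^*,b_i\rangle\le\langle u^*,b_i\rangle$ for $i\in I^\bullet\setminus(I\cup J)$ become $\langle (F'_x)^Tv^*,b_i\rangle\le 0$.
\item The conditions $\langle v^*,u_j\rangle\le 0$ for $j\in I^\bullet\setminus(I\cup J)$ are unchanged.
\item The conditions $\langle v^*,u_j\rangle=0$ for $j\in I^-\cup J$ are unchanged.
\end{itemize}
Since $D^*P_K$ is a union over the pairs $(I,J)$, requiring $v^*=0$ for every such $v^*$ is exactly requiring implication \eqref{4.7} for every admissible pair $(I,J)$.

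The argument is essentially bookkeeping. The main point needing care is the equality (not merely the inclusion) in Remark~\ref{R42}, which rests on the full row rank of $\nabla G(\bar p,\bar x)$. The other delicate point is the sign in the inequality rows: the cancellation of $\langle v^*,b_i\rangle$ must be tracked correctly.
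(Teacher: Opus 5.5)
Your proposal is correct and follows essentially the same route as the paper's proof. Remark~\ref{R42} together with Theorem~\ref{thm:coderivativeS} gives the exact formula for $D^*S(\bar p,\bar x)(0)$, Theorem~\ref{Thm34} turns membership in $D^*P_K$ into the sign conditions of \eqref{4.7}, and Lemma~\ref{Mo} concludes. You also make explicit two steps the paper leaves implicit: the closedness of $\operatorname{gph}S$, which the criterion requires, and the injectivity of $(F'_p(\bar p,\bar x))^T$, which is needed to pass from $(F'_p(\bar p,\bar x))^Tv^*=0$ to $v^*=0$.
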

\begin{proof}
	Since $F'_p(\bar p,\bar x)$ is surjective, it follows from Remark~\ref{R42} and Theorem~\ref{thm:coderivativeS} that
	\[
	D^*S(\bar p,\bar x)(0)
	=
	\left\{
	-(F'_p(\bar p,\bar x))^{T} v^*
	\;\Big|\;
	v^* \in D^*P_K\big(\bar x - F(\bar p,\bar x)\big)\big(v^* - (F'_x(\bar p,\bar x))^{T} v^*\big)
	\right\}.
	\]
	
	It follows directly that \(0 \in D^*S(\bar p,\bar x)(0)\) by choosing \(v^*=0\).
Conversely, for \(y \in \mathbb{R}^m\), one has
\(
y \in D^*S(\bar p,\bar x)(0)\) if and only if there exists
such that
	\[
	y^* = -(F'_p(\bar p,\bar x))^{T} v^*
	\]
	and
	\[
	v^* \in D^*P_K\big(\bar x - F(\bar p,\bar x)\big)\big(v^* - (F'_x(\bar p,\bar x))^{T} v^*\big).
	\]
	
According to Theorem~\ref{Thm34}, the latter condition holds if and only if there exist 
subsets $I,J \subset I^\bullet$ with $I\cap J = \emptyset$ such that
\[
\left\{
\begin{aligned}
	\langle (F'_x(\bar p,\bar x))^{T} v^*, b_i\rangle &= 0 
	&\quad &\forall i\in I^+\cup I,\\[4pt]
	\langle (F'_x(\bar p,\bar x))^{T} v^*, b_i\rangle &\leq 0,\quad 
	\langle v^*, u_i\rangle \leq 0 
	&\quad &\forall i\in I^\bullet\setminus (I\cup J),\\[4pt]
	\langle v^*, u_i\rangle &= 0 
	&\quad &\forall i\in I^-\cup J
\end{aligned}
\right.
\]

Therefore, by Lemma~\ref{Mo}, the solution mapping $S$ has the Aubin property 
around $(\bar p,\bar x)$ if and only if condition~\eqref{4.7} holds.

\end{proof}

Finally, based on the above analysis, we present an example illustrating Theorem~\ref{Thm4.4}.

\begin{Example}{\rm
	Let $K\subset \mathbb{R}^3$ be the cone isotone projection  generated by
	\[
	b_1=(1,0,0),\quad b_2=(1,1,0),\quad b_3=(1,1,1),
	\]
	with the associated dual system
	\[
	u_1=(-1,1,0),\quad u_2=(0,-1,1),\quad u_3=(0,0,-1).\] 
	Consider the mapping
	\[
	F(p,x)=(x_1,0,0)-p.
	\]
	Then $F'_p(\bar p,\bar x)=-I$ is surjective and
	\[
	F'_x(\bar p,\bar x)=
	\begin{pmatrix}
		1&0&0\\
		0&0&0\\
		0&0&0
	\end{pmatrix},
	\qquad
	(F'_x(\bar p,\bar x))^T v^*=(v_1,0,0).
	\]

\textbf{Case 1. $|I^-_{\bar x - F(\bar p,\bar x)}|\geq 2$.}
Then the Aubin property holds at $(\bar p,\bar x)$. 
All possible configurations are listed below.

\begin{figure}[h]
	\centering
	\includegraphics[width=0.9\textwidth,height=0.6\textheight,keepaspectratio]{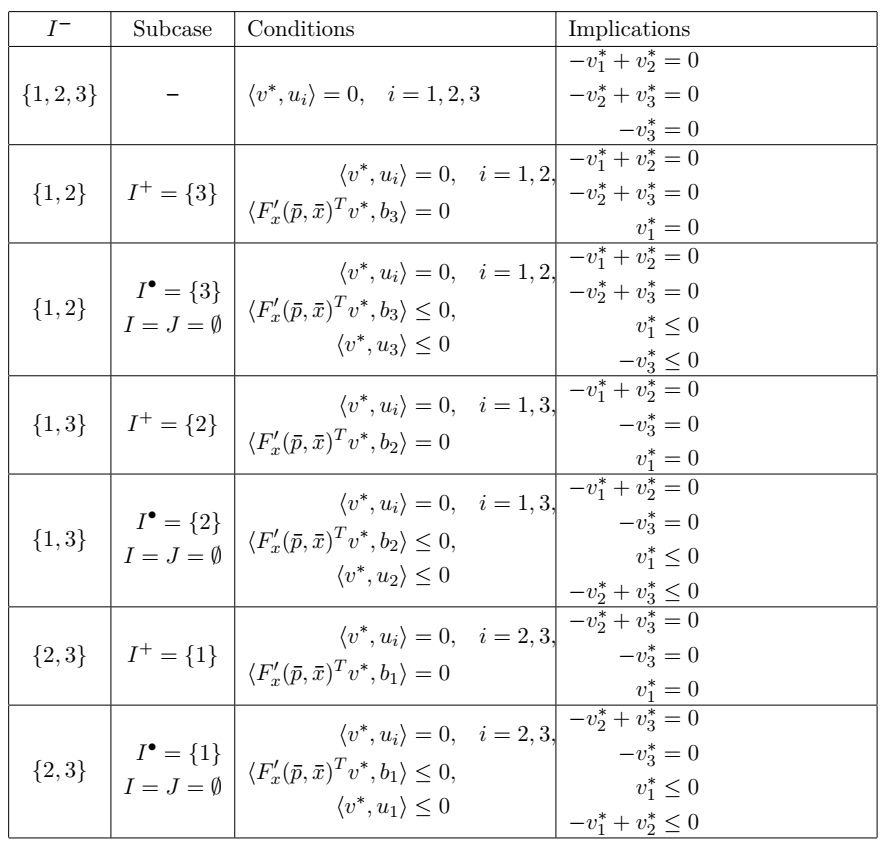}
\end{figure}
\FloatBarrier

\medskip
\noindent
In all cases, the above systems imply $v^*=0$. Hence, by Theorem \ref{Thm4.4}, the Aubin property holds at $(\bar p,\bar x)$.

\medskip
\noindent
\textbf{Case 2. $|I^-_{\bar x - F(\bar p,\bar x)}|\leq 1$.}
In this case, the Aubin property fails at $(\bar p,\bar x)$. 
All possible configurations are listed below.

\FloatBarrier
\begin{figure}[!htbp]
	\centering
\includegraphics[width=0.9\textwidth,height=0.6\textheight,keepaspectratio]{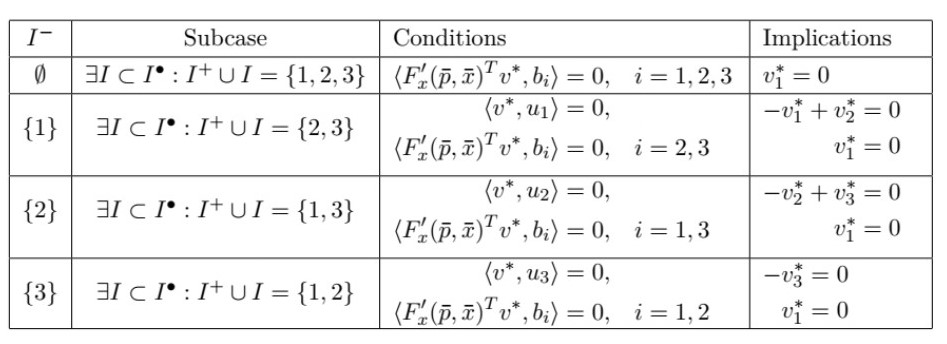}
\end{figure}

\medskip
\noindent
In these cases, the above systems do not imply $v^*=0$. Hence, by Theorem \ref{Thm4.4}, the Aubin property fails at $(\bar p,\bar x)$.

}
\end{Example}
	\section{Conclusions}
	
This paper has examined the local variational behavior of the metric projection onto isotone projection cones in $\mathbb{R}^n$. By employing a local representation of the projection mapping, we established explicit formulas for both the Fréchet and the Mordukhovich coderivatives. These results provide a transparent description of the structure of the projection operator and enable a precise analysis of its stability properties.

As an application, we characterized the covering constant of the projection mapping, revealing a clear distinction between interior and boundary regimes. Moreover, the obtained coderivative formulas were used to derive verifiable conditions ensuring the Aubin property for solution mappings to parametric complementarity problems associated with isotone projection cones.

A natural direction for future research is to extend the present analysis to general lattice cones. Such an extension is expected to provide further insights into the behavior of projection operators and, in particular, to yield useful information on projections onto polyhedral convex sets.
	\\
	
	
	{\bf Data Availability Statement} This manuscript has no associated data.
	
	{\bf Competing Interests} The author has no competing interests to declare that are relevant to the content of this article.
	

\end{document}